\theoremstyle{plain}
\newtheorem*{thm*}{Theorem}
\newtheorem{thm}{Theorem}
\Crefname{thm}{Theorem}{Theorems}
\newtheorem*{lem*}{Lemma}
\newtheorem{lem}[thm]{Lemma}
\Crefname{lem}{Lemma}{Lemmas}
\newtheorem*{claim*}{Claim}
\newtheorem{claim}[thm]{Claim}
\crefname{claim}{Claim}{Claims}
\Crefname{claim}{Claim}{Claims}
\Crefname{prop}{Proposition}{Propositions}
\newtheorem{cor}[thm]{Corollary}
\crefname{cor}{Corollary}{Corollaries}
\crefname{conj}{Conjecture}{Conjectures}
\Crefname{qn}{Question}{Questions}
\newtheorem{obs}[thm]{Observation}
\Crefname{obs}{Observation}{Observations}
\Crefname{ex}{Example}{Examples}
\theoremstyle{definition}
\Crefname{prob}{Problem}{Problems}
\newtheorem{defn}[thm]{Definition}
\Crefname{defn}{Definition}{Definitions}
\theoremstyle{remark}
\renewenvironment{proof}[1][]{\begin{trivlist}
\item[\hspace{\labelsep}{\bf\noindent Proof#1.\/}] }{\qed\end{trivlist}}
\newcommand{\remove}[1]{}
\newcommand{\eps}{\varepsilon}
\newcommand{\whp}{with high probability}
\newcommand{\pr}{\mathbb{P}}
\newcommand{\dirpath}[1]{\overrightarrow{P_{#1}}}
   \def\MR#1{}
\begin{document}


\title{Monochromatic paths in random tournaments}
\date{\vspace{-5ex}}
\author{
	Matija Buci\'c\thanks{
	    Department of Mathematics, 
	    ETH, 
	    8092 Zurich;
	    e-mail: \texttt{matija.bucic}@\texttt{math.ethz.ch}.
	}
	\and
    Shoham Letzter\thanks{
        ETH Institute for Theoretical Studies,
        ETH,
        8092 Zurich;
        e-mail: \texttt{shoham.letzter}@\texttt{eth-its.ethz.ch}.
    }
    \and
	Benny Sudakov\thanks{
	    Department of Mathematics, 
	    ETH, 
	    8092 Zurich;
	    e-mail: \texttt{benjamin.sudakov}@\texttt{math.ethz.ch}.     
	}
}

\maketitle

\begin{abstract}

    \setlength{\parskip}{\medskipamount}
    \setlength{\parindent}{0pt}
    \noindent
    
    We prove that, \whp, any $2$-edge-colouring of a random tournament on $n$ vertices contains a monochromatic path of length $\Omega(n / \sqrt{\log n})$. This resolves a conjecture of Ben-Eliezer, Krivelevich and Sudakov and implies a nearly tight upper bound on the oriented size Ramsey number of a directed path.

\end{abstract}
\section{Introduction} \label{sec:intro}

Ramsey theory refers to a large body of mathematical results, which roughly say that any sufficiently large structure is guaranteed to have a large well-organised substructure. For example, the celebrated theorem of Ramsey \cite{ramsey1929problem} says that for any fixed graph $H$, every $2$-edge-colouring of a sufficiently large complete graph contains a monochromatic copy of $H$. The \emph{Ramsey number of $H$} is defined to be the smallest order of a complete graph satisfying this property.

In this paper, we study an analogous phenomenon for oriented graphs. An \emph{oriented graph} is a directed graph which can be obtained from
a simple undirected graph by orienting its edges. The undirected graph, obtained by ignoring edge orientations of an oriented graph, is called its \textit{underlying graph}.

A \emph{tournament} is an orientation of the complete graph.
Given directed graphs $G$ and $H$, we write $G \to H$ if any $2$-edge-colouring of $G$ contains a monochromatic copy of $H$. The \textit{oriented Ramsey number} of $H$ is defined to be the smallest $n$ for which $T \to H$ for every tournament $T$ on $n$ vertices.

While in the undirected case the Ramsey number exists and is finite for every $H$, in the directed case the Ramsey number can be infinite.
Indeed, given a directed graph $H$ that contains a directed cycle and any tournament $T$, we colour the edges of $T$ as follows. Denote the vertices of $T$ by $v_1, \ldots, v_n$ and colour edges $\overrightarrow{v_i v_j}$ red if $i<j$ and blue otherwise. It is easy to see that this colouring has no monochromatic directed cycle, so, in particular, there is no monochromatic copy of $H$.

Therefore it only makes sense to study the Ramsey properties for acyclic graphs. We note that the oriented Ramsey number of an acyclic graph is always finite. This follows since every acyclic graph is a subgraph of a transitive tournament, and the oriented Ramsey number of transitive tournaments is finite by essentially the same argument as for the complete graphs in the undirected Ramsey case. One of the most basic classes of acyclic graphs are oriented paths and in particular, directed paths (which are oriented paths whose edges are all oriented in the same direction). In this paper we study the problem of finding such monochromatic paths in tournaments. 

Another major difference between the undirected and the oriented Ramsey numbers is the fact that there is only one complete graph on $n$ vertices, while there are many tournaments on $n$ vertices. In particular, the answer to how long a monochromatic path we can find in a tournament on $n$ vertices $T$, depends on $T$ as well as the colouring of the edges. 
We thus define $m(T)$ to be the largest $m$ such that $T \to \dirpath{m}$, where $\dirpath{m}$ is the directed path on $m$ vertices. 

The celebrated Gallai-Hasse-Roy-Vitaver theorem \cite{gallai1968directed,hasse1965algebraischen,roy1967nombre,vitaver1962determination}, states that any directed graph, whose underlying graph has chromatic number at least $n$, contains a directed path on $n$ vertices. A simple consequence of this theorem is that $m(T) \ge \sqrt{n}$, for any tournament $T$ on $n$ vertices. Indeed, let $T$ be a $2$-coloured tournament on $n$ vertices, let $T_R$ be the subgraph of $T$ consisting of red edges and $T_B$ the subgraph consisting of blue edges. Since $\chi(T)=n$ we have $\chi(T_R) \ge \sqrt{n}$ or $\chi(T_B) \ge \sqrt{n}$ (by $\chi(G)$, where $G$ is an oriented graph, we mean the chromatic number of its underlying graph). In either case the aforementioned theorem implies there is a monochromatic path on at least $\sqrt{n}$ vertices.

To see that this is tight, consider the following $2$-colouring of a \textit{transitive tournament} $T$ on $n$ vertices, whose vertices are $ 1, \ldots, n$ and $\overrightarrow{xy}$ is an edge if and only if $x<y$; for convenience we assume that $n$ is a perfect square. Partition the vertices of $T$ into subsets $A_i$ of size $\sqrt{n}$, such that all vertices in $A_i$ are smaller than all vertices in $A_j$ for $i<j$. Colour the edges within $A_i$ blue and the other edges red. It is easy to see that a monochromatic path in this colouring has at most $\sqrt{n}$ vertices. In particular $m(T) \le \sqrt{n}$, so $m(T) = \sqrt{n}$.

The above resolves the problem of minimising the value of $m(T)$ among all $n$-vertex tournaments. It is natural to consider the opposite question: what is the \emph{maximum} of $m(T)$ and which $T$ attains this maximum? 

This question was implicitly posed by Ben-Eliezer, Krivelevich and Sudakov in \cite{ben2012size}, where they note that $m(T) \le \frac{2n}{\sqrt{\log n}}$ for any $n$-vertex tournament. Indeed, it is well known and easy to see that any tournament of order $n$ has a transitive subtournament of order $\log n$. Using this we can find a sequence of vertex-disjoint transitive subtournaments $A_i$ of order $\frac{1}{2} \log n$ covering all but at most $\sqrt{n}$ vertices. We denote the set of remaining vertices by $B$. We now $2$-colour each of $A_i$, as described above, to ensure that the longest monochromatic path within $A_i$ is of length $\sqrt{|A_i|}$. We then colour all edges from $A_i$ to $A_j$ blue if $i<j$ and red if $i > j$. Finally, we colour the edges going out of $B$ red and edges going into $B$ blue, while edges within $B$ can be coloured arbitrarily. In this colouring, the longest monochromatic path has length at most $\frac{2n}{\log n} \sqrt{\frac{ \log n  }{2}}+\sqrt{n} \le \frac{2n}{\sqrt{\log n}}.$

Are there tournaments that achieve this bound? Intuitively, such tournaments should be `far away from being transitive'. With this in mind, Ben-Eliezer, Krivelevich and Sudakov \cite{ben2012size} conjectured that random tournaments achieve this bound, up to a constant factor, \whp.

The random tournament on $n$ vertices, which we denote by $T_n$, is obtained by orienting each edge uniformly at random and independently of other edges.
With respect to this probability space we say an event $A$ holds \emph{\whp{}} if the probability that $A$ occurs tends to $1$ as $n \to \infty$.

To support their conjecture, Ben-Eliezer, Krivelevich and Sudakov show that a random tournament satisfies $m(T_n) \ge \frac{cn}{\log n}$ \whp, where $c > 0$ is a constant. 
Here we improve this bound, proving the following essentially tight result.

\begin{restatable}{thm}{main} \label{thm:main}
There is a constant $c > 0$ such that a random tournament $T = T_n$ satisfies $T \rightarrow \dirpath{m}$, \whp{}, where $m=\frac{cn}{\sqrt{\log n}}$.
\end{restatable}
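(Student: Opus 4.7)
My plan is to combine pseudorandomness properties of $T_n$, which hold with high probability, with a partition-and-concatenate argument to produce a monochromatic directed path of the required length under any $2$-edge-colouring. I would first establish the following discrepancy property: w.h.p., for every pair of disjoint subsets $A, B \subseteq V(T_n)$ with $|A|, |B| \ge C \log n$, the number of edges directed from $A$ to $B$ equals $(1 \pm o(1))|A||B|/2$. This follows by a Chernoff bound together with a union bound over the at most $3^n$ ordered pairs of disjoint subsets.

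Fix any $2$-colouring of such a pseudorandom tournament $T$. The direct application of the Gallai-Hasse-Roy-Vitaver theorem only gives $m(T) \ge \sqrt n$ (via $\chi(T_R)\chi(T_B) \ge n$), which is too weak. To improve this by the factor $\sqrt{n/\log n}$, I would partition $V(T)$ into $\ell \approx n/\log n$ parts $V_1, \ldots, V_\ell$ of size $k = \Theta(\log n)$ each. Applied inside each $V_i$, the same chromatic-number argument produces a monochromatic directed path of length $\Omega(\sqrt{\log n})$ in $T[V_i]$; the task then reduces to concatenating these intra-part paths, whose colours may vary, into a single monochromatic path of length $\Omega(\ell \cdot \sqrt{\log n}) = \Omega(n/\sqrt{\log n})$.

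The main obstacle I expect is to realise the concatenation in a single colour. The adversary may choose the inter-part edge colours or force the intra-part paths to have few available endpoints; to cope with this, I would strengthen each building block of the argument using the pseudorandomness of $T$. First, between any two parts $V_i, V_j$, at least one colour provides $\Omega(|V_i||V_j|)$ edges in the appropriate direction, so linking edges of a convenient colour are plentiful. Second, within each $V_i$ I would establish a flexible-endpoint version of the Gallai-Hasse-Roy-Vitaver bound, producing many monochromatic paths of length $\Omega(\sqrt{\log n})$ with varied start- and end-vertices, for instance by exploiting Hamiltonicity of large induced subtournaments of $T[V_i]$ inside colour classes. Combining these ingredients, one greedily builds a single-colour path linking one intra-part path to the next via a matching-colour inter-part edge, losing only a constant factor per junction. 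The resulting $\sqrt{\log n}$ factor then emerges from the optimal choice of $k \asymp \log n$, which balances the $\sqrt k$ gain per part against the $n/k$ number of parts.
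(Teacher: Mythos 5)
Your overall plan---partition $V(T)$ into blocks of size $\Theta(\log n)$, find a monochromatic path of length $\Omega(\sqrt{\log n})$ in each block, and concatenate---is the natural first idea and is close in spirit to one of the two cases in the paper. However, the step you flag as the ``main obstacle'' is a genuine gap, and the hand-waved resolution (``linking edges of a convenient colour are plentiful \ldots losing only a constant factor per junction'') does not go through as stated. The adversary controls the colouring, so it can (i) make the intra-block long paths have \emph{different} colours in different blocks, and (ii) orient the inter-block edges of any one fixed colour so that they point in inconsistent directions across the block structure. In that situation there is no consistent ``forward'' direction for a single colour, and a greedy concatenation gets stuck after a bounded number of blocks: to extend a blue path from block $V_i$ into $V_j$ you need a blue edge from the current endpoint to a usable start vertex in $V_j$, but nothing forces such an edge to exist. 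Your appeal to pseudorandomness gives $\Omega(k^2)$ edges in \emph{some} direction and \emph{some} colour between blocks, but not of the colour and direction you need. The ``flexible-endpoint'' idea also needs work: ``Hamiltonicity of large induced subtournaments inside colour classes'' is not well-defined, since a colour class is not itself a tournament.

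The paper resolves exactly this difficulty with two ingredients you don't have. First, it splits into cases according to the monochromatic cycle structure: if there are many vertex-disjoint \emph{medium} (length $\Theta(\log n)$) monochromatic cycles, then a meta-argument on the auxiliary $2$-coloured complete digraph on these cycles (using Raynaud's theorem and a claim about chaining cycles via out-edges) already yields a monochromatic path of length $\Omega(n)$, far more than needed; here the cycles, not paths, supply the flexible endpoints for free. Second, if there are few medium cycles on some large vertex set $U$, it shows that in fact $U$ has \emph{no} long monochromatic cycles either, and then uses Observation~\ref{obs:min-deg-cycle} to extract an ordering of $U$ in which blue back-edges are rare (Claim~\ref{claim:order}). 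That ordering is what makes concatenation possible: combined with pseudorandomness, every ordered pair of blocks has both red ``backward'' edges and blue ``forward'' edges between any large subsets (Claim~\ref{claim:two-direction-pair}), so after choosing the majority colour of intra-block paths (say red), one can march through the red blocks in order and always find a linking edge of the right colour and direction. Without the cycle-based case split and the induced ordering, your concatenation step is not justified, and I don't see how to repair it within your framework.
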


Theorem \ref{thm:main} resolves the question of the maximal value of $m(T_n)$, up to a constant, with respect to a directed path. This gives rise to a natural next question: is it possible to generalise it to other graphs, in place of the directed path? In this direction, we prove a generalisation of \Cref{thm:main} for arbitrarily oriented paths, which is tight up to a constant factor. Given an arbitrarily oriented path $P$, we denote by $\ell(P)$ the length of the longest directed subpath of $P$.

\begin{thm} \label{thm:oriented-paths}
    There is a constant $c>0$ such that, \whp{}, if $T$ is a random tournament on $c(\ell\sqrt{\log{\ell}} + n)$ vertices, then $T \rightarrow P$, for every oriented path $P$ on $n$ vertices with $\ell(P) \le \ell$.
\end{thm}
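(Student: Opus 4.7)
\textbf{Proof plan for \Cref{thm:oriented-paths}.} I would reduce to \Cref{thm:main} by decomposing the oriented path into directed blocks. Write $P = P_1 P_2 \cdots P_k$, where the $P_i$ are maximal directed subpaths of $P$, consecutive $P_i, P_{i+1}$ share one endpoint (a direction-change vertex of $P$, i.e.\ a source or a sink), and the $P_i$ alternate in orientation. By assumption $|V(P_i)| \le \ell$ for each $i$, and $\sum_i |E(P_i)| = n-1$. Let $N = c(\ell\sqrt{\log \ell} + n)$ and fix an adversarial $2$-edge-colouring of $T = T_N$. The goal is to embed $P_1, \ldots, P_k$ one at a time, all in the \emph{same} colour (say red), with consecutive images meeting at the image of the shared switching vertex and otherwise using pairwise-disjoint vertices.

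The budget of the theorem suggests the following accounting: \Cref{thm:main} applied on about $c'\ell\sqrt{\log \ell}$ vertices gives one monochromatic $\dirpath{\ell}$, so this cost is only paid once; everything beyond that should be absorbed into the $O(n)$ term by reusing vertices along a single long red structure. Accordingly, I would set aside a random reservoir $R \subset V(T)$ of size $\Theta(\ell\sqrt{\log\ell})$, whose role is to provide long monochromatic directed paths on demand (via \Cref{thm:main}), and let $W = V(T)\setminus R$ of size $\Theta(n)$ host the bulk of the embedding.

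The iterative step needs a strengthening of \Cref{thm:main}: in a random tournament with any $2$-colouring, one should obtain a long monochromatic directed path not just globally but starting at (or ending at) a prescribed vertex taken from a large set of candidates, and in either direction. Granted such a rooted/directed version, one embeds $P_1$ by invoking it on $R$ to get a red $\dirpath{|P_1|}$, then proceeds greedily: having embedded $P_1, \ldots, P_{i-1}$ and fixed the image $u_{i-1}$ of the switching vertex between $P_{i-1}$ and $P_i$, extract from $(R \cup W)$ minus the used vertices a red directed path of length $|P_i|$ with the correct orientation, rooted at $u_{i-1}$. Standard random-tournament concentration (the unused portion of $T$ is still essentially random and still has plenty of monochromatic structure) controls the depletion so the procedure never gets stuck.

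The main obstacle is forcing all $k$ embeddings to be in the \emph{same} colour; a naïve per-step pigeonhole loses a factor $2^k$ and cannot possibly match the bound. To handle this, I would prove (as part of the strengthened version of \Cref{thm:main}) that the colour $\chi \in \{\text{red, blue}\}$ can be committed to up front: in the random tournament, at least one colour class supports a rich family of long directed paths starting from a linear-sized, well-distributed set of vertices, in either orientation. Committing to that colour $\chi$ globally then allows the greedy scheme above to run to completion, while a union bound over the (polynomial in $n$) configurations that arise gives the result \whp. The role of $\ell(P)\le \ell$ is precisely to bound the length of each individual invocation of \Cref{thm:main}, ensuring that the $\ell\sqrt{\log\ell}$ cost is paid once globally (via $R$) rather than once per block.
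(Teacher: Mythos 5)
Your high-level intuition is right --- embed $P$ block by block, all blocks in one colour, with $\ell\sqrt{\log\ell}$ paid once and $O(n)$ absorbing the rest --- and this matches the spirit of the paper's proof. But there are two genuine gaps, one of which is the crux.

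\textbf{The colour-commitment step is named but not solved.} You correctly identify that forcing all $k$ blocks into the \emph{same} colour is the main obstacle, and you say you would ``prove (as part of the strengthened version of \Cref{thm:main}) that the colour $\chi$ can be committed to up front.'' But you give no mechanism for this, and it is precisely where the work lies: the adversary chooses the colouring, so there is no \emph{a priori} reason either colour class should have the rich structure you ask for. The paper handles this with a concrete dichotomy. Take red to be the majority colour, so the red digraph has $\Omega(|T|^2)$ edges. Either every induced subgraph on a constant fraction of $|T|$ vertices contains a red $\protect\dirpath{\ell}$ --- in which case a red copy of $P$ is built by the block-by-block machinery (\Cref{lem:construct-or-path}) applied to the red digraph --- or there is a large induced subgraph $H$ with \emph{no} red $\protect\dirpath{\ell}$. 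In the latter case, applying the DFS lemma (\Cref{lem:dfs}) to $H_R$ produces two linear-sized sets $U,W$ with \emph{all} edges from $U$ to $W$ blue (pseudorandomness then says there are $\Omega(|H|^2)$ of them), and moreover, since $T$ has the property that every not-too-small vertex set contains some monochromatic $\protect\dirpath{\ell}$ but $H$ has no red one, every such set inside $H$ contains a \emph{blue} $\protect\dirpath{\ell}$. That hands the same block-by-block machinery to the blue digraph. Without some argument of this kind, your plan does not close.

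\textbf{The ``rooted'' strengthening of \Cref{thm:main} is both unnecessary and unproven.} You propose that one needs a version of \Cref{thm:main} producing a long monochromatic directed path \emph{starting at a prescribed vertex} from a large candidate set, in either direction. That is considerably stronger than what is required and would need its own (nontrivial) proof. The paper sidesteps it entirely: it first finds, in the chosen colour class $G$, an $(x+n)$-mindegree pair $(A,B)$ (possible as soon as $e(G) = \Omega((x+n)|G|)$, \Cref{lem:find-min-deg-pair}), where $x \approx \ell\sqrt{\log\ell}$. To append the block $P_k$ to a partial embedding ending at $v\in A$, one just looks at the set $S$ of unused out-neighbours of $v$ in $B$; since $\deg^+_G(v)\ge x+n$ and at most $n$ vertices are used, $|S|\ge x$, and $S$ contains a monochromatic $\protect\dirpath{\ell}$ (hence $\protect\dirpath{n_k}$). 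One then jumps from $v$ to the \emph{start of that path}, wherever it happens to be. No rooting is needed. This also removes the need for your reservoir/depletion bookkeeping: the min-degree condition is what guarantees the process never runs out of fresh room, rather than a separate $\Theta(n)$-sized set $W$ with vague concentration arguments.

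In short: the block decomposition and the one-time $\ell\sqrt{\log\ell}$ charge are the right intuitions, but the heart of the argument --- the dichotomy that lets you commit to one colour, and the min-degree-pair mechanism that replaces rooting --- is missing from your proposal.
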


\Cref{thm:main} is related to the study of size Ramsey numbers, which was initiated by Erd\H{o}s, Faudree, Rousseau and Schelp \cite{erdHos1978size} in $1972$, and has received considerable attention since then. The \emph{size Ramsey number} of a graph $H$ is defined to be the minimum of $e(G)$ (the number of edges of $G$) over all graphs $G$ for which $G \rightarrow H$. Answering a question of Erd\H{o}s, Beck \cite{beck1983size} proved in $1983$ that the size Ramsey number of the undirected path on $n$ vertices is linear in $n$.

Here we consider the following natural analogue for oriented paths. 
Given an oriented graph $H$, we define the \emph{oriented size Ramsey number}, denoted by $r_e(H)$, to be  
$$r_e(H)= \min\{e(G) : G \to H, \text{ $G$ is an oriented graph} \}.$$

Ben-Eliezer, Krivelevich and Sudakov \cite{ben2012size} studied this number for the directed path, obtaining the following estimates, for some constants $c_1,c_2$.
$$ c_1 \frac{n^2 \log n}{ (\log \log n)^3}\le r_e\!\left(\dirpath{n}\right) \le c_2n^2 (\log n)^2.$$
We note that this illustrates the difference between the oriented and directed size Ramsey numbers (where in the directed case, $G$ is allowed to be any directed graph). Indeed, the directed size Ramsey number of $P_n$ is $\Theta(n^2)$, as follows from results by Raynaud \cite{raynaud1973circuit} (see also \cite{gyarfas1983vertex} for a simple proof) and Reimer \cite{reimer2002ramsey}.

Note that Theorem \ref{thm:main} immediately implies the following upper bound on $r_e\!\left(\dirpath{n}\right)$, which is tight up to a factor of $O((\log \log n)^3)$.

\begin{cor}\label{cor:size-ramsey}
There is a constant $c>0$ such that
$$r_e\!\left(\dirpath{n}\right) \le cn^2 \log n.$$
\end{cor}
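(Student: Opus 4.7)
The plan is to apply \Cref{thm:main} to a random tournament on a slightly larger vertex set and then simply count its edges. Since $r_e(\dirpath{n})$ is bounded above by the number of edges of any oriented graph $G$ with $G \to \dirpath{n}$, it suffices to exhibit a tournament $T$ on $N$ vertices with $T \to \dirpath{n}$ and $\binom{N}{2} = O(n^2 \log n)$.

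First, I would fix the absolute constant $c$ provided by \Cref{thm:main} and choose $N$ to be the smallest integer for which $cN/\sqrt{\log N} \ge n$. A routine estimate shows that $N = \Theta(n \sqrt{\log n})$ suffices: substituting $N = C n \sqrt{\log n}$ for a sufficiently large constant $C = C(c)$ one checks that $\log N = (1 + o(1)) \log n$, so $cN/\sqrt{\log N} \ge n$ for all large enough $n$.

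Second, I would apply \Cref{thm:main} to the random tournament $T_N$ on $N$ vertices. With high probability, $T_N \to \dirpath{m}$ where $m = cN/\sqrt{\log N} \ge n$; since $\dirpath{n}$ is a subgraph of $\dirpath{m}$, this gives $T_N \to \dirpath{n}$ with high probability. In particular, for all sufficiently large $n$ there exists \emph{at least one} tournament $T$ on $N$ vertices with $T \to \dirpath{n}$.

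Finally, by the definition of the oriented size Ramsey number we obtain
\[
r_e\!\left(\dirpath{n}\right) \le e(T) = \binom{N}{2} = O(N^2) = O\!\left(n^2 \log n\right),
\]
which yields the claimed bound after absorbing constants. There is no real obstacle here: the only point requiring care is the calibration of $N$ so that $cN/\sqrt{\log N} \ge n$ while keeping $N^2 = O(n^2 \log n)$, and this is immediate from the $\Theta(n \sqrt{\log n})$ estimate above.
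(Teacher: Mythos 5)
Your proof is correct and is exactly the routine derivation the paper has in mind when it says the corollary ``immediately'' follows from \Cref{thm:main}: take a random tournament on $N = \Theta(n\sqrt{\log n})$ vertices, invoke \Cref{thm:main} to get $T_N \to \dirpath{n}$ with high probability, and bound $r_e(\dirpath{n})$ by $\binom{N}{2} = O(n^2\log n)$. The calibration of $N$ via $\log N = (1+o(1))\log n$ is the only point needing care and you handle it correctly.
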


\subsection{Organisation of the paper}
   In the following section, we list several results that we shall need for the proof of \Cref{thm:main}. 
   In \Cref{sec:main-proof}, we prove an asymmetric version of \Cref{thm:main} that applies to pseudorandom tournaments from which we easily deduce \Cref{thm:main}. We prove \Cref{thm:oriented-paths} in \Cref{sec:oriented-paths}. We conclude the paper in \Cref{sec:conclusion} with some remarks and open problems.
   
   Throughout the paper we do not try to optimize constants, all logarithms are in base $2$ and we omit floor and ceiling signs whenever these are not crucial. By $2$-colouring, we always mean a colouring of edges in two colours.
 
\section{Preliminaries} \label{sec:prelims}
In this section we list several general results we need for the proof of Theorem \ref{thm:main}. We start with a simple observation.

\begin{obs} \label{obs:min-deg-cycle}
    Let $G$ be a directed graph in which every vertex has in-degree at least $k$. Then $G$ contains a directed cycle of length at least $k+1$.
\end{obs}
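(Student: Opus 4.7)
The plan is to use the standard longest-path trick. I would take $P = v_1 \to v_2 \to \cdots \to v_m$ to be a longest directed path in $G$, and focus on the starting vertex $v_1$ (since we have control on in-degrees).

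First I would argue that every in-neighbour of $v_1$ must lie on $P$: if some $u \notin P$ satisfied $u \to v_1$, then $u \to v_1 \to v_2 \to \cdots \to v_m$ would be a strictly longer directed path, contradicting the maximality of $P$. Since $v_1$ has in-degree at least $k$, it therefore has at least $k$ in-neighbours among $\{v_2, \ldots, v_m\}$.

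Next, let $j$ be the largest index such that $v_j \to v_1$ is an edge. Because all $k$ in-neighbours of $v_1$ on $P$ sit in $\{v_2, \ldots, v_j\}$, a set of size $j-1$, we have $j - 1 \ge k$, i.e.\ $j \ge k+1$. Then the closed walk
\[
 v_1 \to v_2 \to \cdots \to v_j \to v_1
\]
is a directed cycle of length $j \ge k+1$, as required.

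There is essentially no obstacle here; the only thing worth double-checking is the choice of endpoint (start versus end of a longest path). Working with the start vertex $v_1$ and its in-neighbours is the correct way to exploit the in-degree hypothesis; an analogous argument using the terminal vertex would need a lower bound on out-degrees instead.
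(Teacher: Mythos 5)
Your proof is correct and takes essentially the same approach as the paper: both consider a longest directed path, observe that the start vertex has all its in-neighbours on the path, and use the in-degree bound to find an in-neighbour far enough along the path to close a long cycle. Your version is slightly more explicit about choosing the maximal index $j$, but the argument is identical in substance.
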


\begin{proof}
    Consider a longest directed path $v_1 \ldots v_l$ in $G$. Note that $v_1$ cannot have any in-neighbours outside of the path, by the maximality assumption on the path. But, as $v_1$ has in-degree at least $k$, $v_1$ needs to have an in-neighbour $v_i$ on the path with $i \ge k+1$. Now, $(v_1 \ldots v_i)$ is a directed cycle of length at least $k+1$, as required.
\end{proof}

We give several simple definitions.
Given two subsets of vertices of a directed graph $G$, let $e_G(A,B)$ denote the number of edges of $G$ oriented from $A$ to $B$; we often omit $G$ and write $e(A,B)$ when it is clear which $G$ it refers to. If $G$ is a $2$-coloured graph, we denote by $e_R(A, B)$ the number of red edges from $A$ to $B$, and by $e_B(A, B)$ the number of blue edges from $A$ to $B$.

\begin{defn}
An oriented graph is said to be $(\eps, k)$\textit{-pseudorandom} if for every two disjoint sets $A$ and $B$ of size at least $k$, we have $e(A,B) \ge \eps|A||B|$.
\end{defn}

Our proof of Theorem \ref{thm:main} works for any $(\eps, \sigma)$-pseudorandom graph, in place of a random tournament. However, we note that our definition of pseudorandomness does not coincide with the one used by Ben-Eliezer, Krivelevich and Sudakov in \cite{ben2012size}. In particular they defined a graph to be $k$-pseudorandom if between any two sets $A,B$ of size $k$ there is an edge oriented from $A$ to $B$. 

The following Lemma shows that a random tournament is $(\eps, \sigma \log n)$-pseudorandom with arbitrary $\eps<\frac{1}{2}$ and $\sigma = \sigma(\eps)$. This is the only property of random tournaments that we will use in the proof.

\begin{lem} \label{lem:pseudorandom}
    For any $0 < \eps < \frac{1}{2}$ there is a constant $\sigma = \sigma(\eps)$ such that if $T=T_n$, then \whp{} $T$ is $(\eps, \sigma \log n)$-pseudorandom. 
\end{lem}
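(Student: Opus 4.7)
My plan is a standard Chernoff-plus-union-bound argument that leverages the independence of edge orientations in $T_n$. Fix $\eps < 1/2$, set $\delta := \tfrac{1}{2} - \eps > 0$, and let $k := \sigma \log n$, where $\sigma = \sigma(\eps)$ will be chosen at the end. For any fixed ordered pair of disjoint sets $(A,B)$ with $|A|=a$ and $|B|=b$, the $ab$ possible edges between $A$ and $B$ are oriented independently, each from $A$ to $B$ with probability $1/2$, so $e(A,B)$ is binomial with mean $ab/2$. A multiplicative Chernoff bound then yields
$$\Pr[e(A,B) < \eps ab] \le \exp(-c\delta^2 ab)$$
for an absolute constant $c > 0$.

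Next, I would union bound over all disjoint pairs $(A,B)$ with $|A|, |B| \ge k$. There are at most $\binom{n}{a}\binom{n}{b} \le n^{a+b}$ such pairs of sizes $(a,b)$, so the probability that $T_n$ fails to be $(\eps, k)$-pseudorandom is at most
$$\sum_{a, b \ge k} \exp\!\big((a+b)\log n - c\delta^2 ab\big).$$
The key elementary inequality is $ab \ge \tfrac{k}{2}(a+b)$ whenever $a, b \ge k$, which follows from $ab \ge ka$ and $ab \ge kb$. Choosing $\sigma$ sufficiently large in terms of $1/\delta^2$, the quadratic term $c\delta^2 ab$ comfortably beats the entropy factor $(a+b)\log n$; each summand is then bounded by, say, $n^{-(a+b)}$, and summing the resulting geometric series over $a, b \ge k$ gives a total bound of $O(n^{-2k}) = o(1)$.

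There is no substantive obstacle here---the argument is essentially textbook---but one mildly subtle point deserves mention. It is tempting to try to reduce the claim to pairs of size exactly $k$, but this reduction does not go through cleanly: averaging over $k$-subsets of a bad larger pair gives the right expectation yet no monotone witness, and a random $k$-subsample could easily still look pseudorandom. So the union bound genuinely runs over all sizes $a, b \ge k$ at once, and it is precisely the inequality $ab \ge k(a+b)/2$ that lets the probabilistic cost of a single pair dominate the $n^{a+b}$ union-bound factor, explaining why the threshold scales as $\log n$ rather than something larger.
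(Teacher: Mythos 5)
Your proof is correct: the Chernoff estimate, the union bound over all pairs of sizes $a,b \ge k$, and the elementary inequality $ab \ge \tfrac{k}{2}(a+b)$ together give a valid argument, and the choice $\sigma = \Theta(\delta^{-2})$ does make the exponential gain dominate the entropy cost, giving a total failure probability of $o(1)$. This is a fine proof, and it would stand on its own in the paper.

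However, your parenthetical remark about the reduction to size-$k$ pairs is mistaken, and the paper in fact uses precisely that reduction. The paper only applies Chernoff and the union bound to pairs of size exactly $k = \sigma\log n$, concluding that with high probability every such pair $(A',B')$ satisfies $e(A',B') \ge (\tfrac12-\delta)k^2$. It then extends to arbitrary disjoint $A,B$ of sizes $a,b \ge k$ by a deterministic double-counting argument: each edge from $A$ to $B$ lies in exactly $\binom{a-1}{k-1}\binom{b-1}{k-1}$ pairs of $k$-subsets $(A',B')$, and there are $\binom{a}{k}\binom{b}{k}$ such pairs, so
\[
e(A,B) \;\ge\; \frac{\binom{a}{k}\binom{b}{k}\bigl(\tfrac12-\delta\bigr)k^2}{\binom{a-1}{k-1}\binom{b-1}{k-1}} \;=\; \Bigl(\tfrac12-\delta\Bigr)ab.
\]
The point you raise --- that a random $k$-subsample of a bad pair could still look pseudorandom --- is true but irrelevant: one does not need a random subsample to be bad, one needs \emph{some} subsample to be bad, and that is exactly what averaging delivers (if $e(A,B) < \eps ab$, the average of $e(A',B')$ over $k$-subpairs is $< \eps k^2$, so some $k$-subpair is bad). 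There is a perfectly good monotone witness in the contrapositive direction. So both routes work: yours trades the averaging step for a slightly larger union bound and the $ab \ge \tfrac{k}{2}(a+b)$ inequality, while the paper's keeps the probabilistic part confined to a single scale and handles the rest deterministically. Neither has an advantage beyond taste, but you should not present the size-$k$ reduction as broken.
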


\begin{proof}
    Let $\delta = \frac{1}{2}-\eps.$ Let $A,B$ be a fixed pair of disjoint sets of vertices of size $k = \sigma \log n$, for a constant $\sigma$. Each of the $k^2$ possible edges is oriented from $A$ to $B$ with probability $1/2$, independently of all other edges, so if $X$ denotes the number of such edges, it is a binomially distributed random variable ($X \sim \text{Bin}\left(k^2, \frac{1}{2}\right)$).
    
    As such, Chernoff bounds (see appendix A of \cite{alon-spencer}) imply the following.
    $$\pr\left(X < \left(\frac{1}{2}-\delta \right)k^2\right) = \pr\left(X > \left(\frac{1}{2}+\delta \right)k^2\right) \le e^{-2\delta^2 k^2}.$$
    Now as there are at most $\binom{n}{k}^2$ choices for $A$ and $B$, the probability that there is a pair of subsets $A,B$ having $e(A,B)<\left(\frac{1}{2}-\delta \right)k^2$ is, by the union bound, at most:
    $$\binom{n}{k}^2 e^{-2\delta^2k^2} \le e^{2k \log n - 2 \delta^2 k^2}= e^{2\sigma \log^2 n (1 - \delta^2 \sigma)}=o(1),$$
    given that $\sigma > \delta^{-2}$.
    
    Finally, given two sets of size $a=|A| \ge k$ and $b=|B| \ge k$, by double counting the number of pairs of subsets of size $k$ a given edge is in, we get that $e(A,B) \ge \frac{\binom{a}{k}\binom{b}{k}\left(\frac{1}{2}-\delta\right)k^2}{\binom{a-1}{k-1}\binom{b-1}{k-1}} \ge \left(\frac{1}{2}-\delta\right)ab$.
    
    So, by setting $\sigma=2\left(\frac{1}{2}-\eps\right)^{-2}$, we see that $T$ is $(\eps, \sigma \log n)$-pseudorandom \whp.
\end{proof}

The following lemma is due to Ben-Eliezer, Krivelevich and Sudakov \cite{ben2012long-cycles,ben2012size} and is a useful application of the Depth-first search (DFS) algorithm.

\begin{lem}\label{lem:dfs}
    Given an oriented graph $G$, there is a directed path $P$ such that vertices of $G-P$ can be partitioned into two disjoint sets $U$ and $W$ such that $|U|=|W|$ and all the edges between $U$ and $W$ are oriented from $W$ to $U$.
\end{lem}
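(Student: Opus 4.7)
I will prove this by running a depth-first search (DFS) on $G$ that explores along in-neighbors, and halting the process at a carefully chosen moment. Throughout, I maintain three disjoint sets partitioning $V(G)$: a set $U$ of \emph{unvisited} vertices (initially $V(G)$), a stack $S$ representing the current search path (initially empty), and a set $W$ of fully processed vertices (initially empty). At each step, if $S$ is empty I move an arbitrary vertex of $U$ onto $S$; otherwise, letting $v$ denote the top of $S$, I push an in-neighbor of $v$ from $U$ onto $S$ if one exists, and otherwise pop $v$ into $W$. The process terminates when both $U$ and $S$ are empty.

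Two invariants will be key. First, $S$ read from top to bottom is always a directed path, because every newly pushed vertex $u$ is an in-neighbor of the then-top vertex $v$, so the edge $u \to v$ extends the directed structure. Second, there is never an edge oriented from $U$ to $W$: when a vertex $v$ is popped into $W$ it has no in-neighbors in the current $U$ (else it would not have been popped), and $U$ can only shrink thereafter, so no edge from $U$ to $v$ can ever appear. In particular, at any point in the process all edges between $U$ and $W$ are oriented from $W$ to $U$.

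To enforce $|U|=|W|$, I would use a discrete continuity argument on the potential $\Phi := |U|-|W|$. Every step changes $\Phi$ by exactly $-1$: a push removes one vertex from $U$ (and leaves $W$ unchanged), while a pop adds one vertex to $W$ (and leaves $U$ unchanged). Since $\Phi$ begins at $n$ and ends at $-n$, there must be a moment at which $\Phi=0$. I would stop the process at that moment and let $P$ be the directed path read off from $S$ and let $U,W$ be the current sets; the two invariants together with $\Phi=0$ then give exactly the desired conclusion.

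I do not anticipate a serious obstacle in carrying this out, as the invariants are standard features of DFS and the potential argument is elementary. The only point requiring genuine care is the direction of the edges between $U$ and $W$, which is what forces us to traverse in-neighbors rather than out-neighbors; reversing this choice would yield the opposite orientation and so prove the wrong statement.
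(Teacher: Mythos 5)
Your proof is correct and uses essentially the same DFS-based argument as the paper, with two cosmetic differences that cancel each other out: you swap the roles of $U$ and $W$ (using $U$ for unvisited vertices and $W$ for the processed ones, whereas the paper does the reverse), and you grow the stack along in-neighbors rather than out-neighbors. Together these choices still yield edges oriented from $W$ to $U$, and your potential argument on $\Phi = |U|-|W|$ is the same as the paper's observation that $|W|-|U|$ drops by exactly one per step and so must hit zero.
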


\begin{proof}
    We start with $U=\emptyset$, $W=V(G)$ and $P= \emptyset.$ We repeat the following procedure, throughout of which $P$ is a path, $|W| \ge |U|$ and all edges between $U$ and $W$ being oriented from $W$ to $U$. If $P$ is empty we take any vertex from $W$ to be the new path $P$ and remove the vertex from $W$. Otherwise, we consider the endpoint $v$ of $P$; if $v$ has any out-neighbours in $W$ we use one of them to extend $P$ and remove it from $W$, and otherwise we remove $v$ from $P$ and add it to $U$.
    Note that at each step either $|W|$ decreases by one or $|U|$ increases by $1$ so the value $|W|-|U|$ decreases by exactly $1$ per step, in particular it reaches $0$, at which point we stop. At this point, $P$, $U$ and $W$ satisfy the requirements of the lemma.
\end{proof}

The following lemma will prove useful at several places in the proof of \Cref{thm:main}. It will mostly be used by exploiting pseudorandomness to imply the conditions of the lemma are satisfied.
\begin{lem}\label{lem:const-cycle}
    Let $\eps>0$, and let $A$ and $B$ be disjoint sets of $n$ vertices of a directed graph $G$. Suppose that $e(B, A) \ge \eps n^2$, and $e(X,Y) > 0$ for every two subsets $X \subseteq A$ and $Y \subseteq B$ of size at least $\frac{\eps}{8} n$. Then there is a directed cycle, alternating between $A$ and $B$, whose length is at least $\frac{\eps}{4}n$.
\end{lem}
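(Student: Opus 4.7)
My plan is to (1) clean $A$ to a large subset $A'$ whose vertices have large in-neighbourhoods in $B$; (2) in the bipartite digraph on $A \cup B$ consisting of the $G$-edges between $A$ and $B$, take a longest alternating path $P$ starting at an $A'$-vertex; (3) use a two-step maximality argument to show $P$ is long on both sides; and (4) close it into a cycle using the hypothesis.

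To set up, let $A' = \{a \in A : d^-_B(a) \ge \eps n / 2\}$; since $e(B, A) \ge \eps n^2$, standard double-counting gives $|A'| \ge \eps n / 2$. Let $H$ denote the bipartite digraph on $A \cup B$ whose edges are the $G$-edges between $A$ and $B$ (in either direction), so every directed path in $H$ alternates between $A$ and $B$. Take $P$ to be a longest alternating path in $H$ that starts at some $v_1 \in A'$, and write $p_A$ and $p_B$ for the numbers of $A$- and $B$-vertices of $P$ (so $p_A \ge p_B$ and $p_A - p_B \in \{0,1\}$).

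I claim $p_A, p_B \ge 3\eps n / 8$, up to additive constants. By the maximality of $P$, no two-step prepending $a' \to b' \to v_1$ is possible with $a' \in A' \setminus V(P)$ and $b' \in B \setminus V(P)$, since such a prepending would yield a longer alternating path starting in $A'$. Setting $R := N^-_B(v_1) \cap (B \setminus V(P))$, this says no $a' \in A' \setminus V(P)$ has an out-neighbour in $R$. Applying the hypothesis with $X = A' \setminus V(P)$ and $Y = R$ then forces $|X| < \eps n / 8$ or $|Y| < \eps n / 8$. The first case gives $p_A \ge |A' \cap V(P)| > |A'| - \eps n / 8 \ge 3 \eps n / 8$; the second gives $p_B \ge |N^-_B(v_1) \cap V(P)| > |N^-_B(v_1)| - \eps n / 8 \ge 3 \eps n / 8$, using $v_1 \in A'$ so $|N^-_B(v_1)| \ge \eps n / 2$. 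Alternation then propagates the bound to both sides.

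To close, label the $A$- and $B$-vertices of $P$ in order as $a_1, \ldots, a_{p_A}$ and $b_1, \ldots, b_{p_B}$, and let $X'$ consist of the last $\eps n / 8$ of the $a_t$'s and $Y'$ of the first $\eps n / 8$ of the $b_t$'s. By the hypothesis there is an edge $a_i \to b_j$ of $G$ with $a_i \in X'$ (so $i > p_A - \eps n / 8$) and $b_j \in Y'$ (so $j \le \eps n / 8$). Following $P$ from $b_j$ to $a_i$ and closing with this edge yields the alternating cycle $a_i \to b_j \to a_{j+1} \to \cdots \to a_{i-1} \to b_{i-1} \to a_i$, which lies entirely inside the simple path $P$ (hence is simple) and has length $2(i - j) \ge \eps n / 4$. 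The key subtlety, and main obstacle, is in step (3): the prepending must be by two vertices rather than one, since a one-step prepending would move the start of $P$ into $B$ and thereby lose the $A'$-start property that gives the crucial bound $|N^-_B(v_1)| \ge \eps n / 2$.
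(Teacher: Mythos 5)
Your proof is correct, and it takes a genuinely different route to the same conclusion. Both arguments share the closing step (find an edge from the last $\frac{\eps}{8}n$ of the $A$-vertices of a long alternating path to the first $\frac{\eps}{8}n$ of its $B$-vertices, and close a cycle of length at least $\frac{\eps}{4}n$). The difference lies entirely in how a sufficiently long alternating path is produced. The paper applies its DFS lemma (Lemma~\ref{lem:dfs}) to partition $V(H)$ into a directed path $P$ and equal-size sets $U, W$ with no $U \to W$ edges, and then shows by a vertex-cover counting argument that if $P$ were short then $V(P) \cup U_A \cup W_B$ would be a set of fewer than $\eps n$ vertices covering all edges from $B$ to $A$, contradicting $e(B,A) \ge \eps n^2$. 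You instead first clean $A$ to the set $A'$ of vertices with $B$-in-degree at least $\frac{\eps n}{2}$, take a longest alternating path $P$ starting in $A'$, and extract from the \emph{two-step} prepending obstruction that one of $A' \setminus V(P)$ and $N^-_B(v_1) \setminus V(P)$ must be small, which forces $P$ to have at least roughly $\frac{3\eps n}{8}$ vertices on each side. Your observation that the prepending must be by two vertices (so as not to lose the $A'$-start property) is precisely the right subtlety, and the arithmetic checks out: $i - j > p_A - \frac{\eps n}{4} > \frac{\eps n}{8}$, so the cycle has length $2(i-j) > \frac{\eps n}{4}$. The paper's route has the advantage of reusing an auxiliary lemma already in play elsewhere; yours is self-contained and replaces the DFS machinery with the more classical extremal device of a maximal path plus a degree-cleaning step.
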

\begin{proof}

    Consider the graph $H$, consisting only of edges between $A$ and $B$. We show that $H$ contains a cycle of length at least $\frac{\eps}{4}n$. Note that it suffices to show that there is a path $P$ in $H$, consisting of at least $\frac{3\eps}{4} n$ vertices. Indeed, let $Y$ be the set of the first $\frac{\eps}{8}n$ vertices of $P$ that are in $B$ and let $X$ be the set of the last $\frac{\eps}{8}n$ vertices of $P$ in $A$. As vertices of $P$ alternate between $A$ and $B$, $Y$ is in the first third of the path while $X$ is in the last third. By the assumptions, there is an edge from $X$ to $Y$, which completes a cycle containing the middle third of $P$, so has at least the required length.

    The vertices of $H$ may be partitioned into a directed path $P$ and two sets $U$ and $W$ of equal size such that there are no edges from $U$ to $W$, by Lemma \ref{lem:dfs} applied to $H$. By the above, we may assume that $P$ has at most $\frac{3\eps}{8}n$ vertices in each of $A$ and $B$. Write $U_A = U \cap A$ and similarly for $U_B, W_A, W_B, P_A, P_B$. Then $|U|=|W|$ and $|A|=|B|$ imply that $|P|$ is even, which in turn due to $P$ alternating between $A$ and $B$ implies $|P_A|=|P_B|$. Combining these equalities, we obtain $|U_A| = |W_B|$ and $|U_B| = |W_A|$. Note that, as there are no edges from $U_A$ to $W_B$, by the assumptions we have $|U_A| < \frac{\eps}{8}n$. So the edges from $B$ to $A$ can be covered by $V(P) \cup U_A \cup W_B$, which is a set of size smaller than $\eps n$, so there are fewer than $\eps n^2$ such edges, a contradiction to the assumption that $e(B,A) \ge \eps n^2$.
\end{proof}

The following theorem is a result of Raynaud \cite{raynaud1973circuit} (see also \cite{gyarfas1983vertex} for a simple proof) about directed monochromatic paths in $2$-colourings of the complete directed graph.  
\begin{thm}[Raynaud \cite{raynaud1973circuit}] \label{thm:raynaud}
In every $2$-colouring of the complete directed graph on $n$ vertices, there is a directed monochromatic path of length $\frac{n}{2}$.
\end{thm}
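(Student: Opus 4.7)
My plan is to prove the following stronger ``path partition'' statement, from which the theorem follows immediately by pigeonhole: in every $2$-colouring of the complete directed graph on $n$ vertices, the vertex set can be partitioned into a red directed path $P_R$ and a blue directed path $P_B$. Given such a partition, the longer of the two paths has at least $n/2$ vertices, which is exactly the conclusion of the theorem.

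I would prove the partition statement by induction on $n$. The base case $n=1$ is trivial, since a single vertex is simultaneously a red path and a blue path (one of the two is allowed to be empty). For the inductive step, remove an arbitrary vertex $v$ from the complete digraph and apply the inductive hypothesis to the remaining $2$-coloured complete digraph on $n-1$ vertices, producing a red directed path $P_R = r_1 \ldots r_m$ and a blue directed path $P_B = b_1 \ldots b_k$ that partition $V \setminus \{v\}$. The remaining task is to reinsert $v$ while preserving a valid red/blue partition.

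There are two easy cases: if $\overrightarrow{v r_1}$ is red, prepend $v$ to $P_R$; and if $\overrightarrow{b_k v}$ is blue, append $v$ to $P_B$. Otherwise we are in the hard case in which $\overrightarrow{v r_1}$ is blue and $\overrightarrow{b_k v}$ is red. Here I would examine the colour of the edge $\overrightarrow{b_k r_1}$ joining the ``inner'' endpoints of the two paths. If $\overrightarrow{b_k r_1}$ is red, then $b_k r_1 \ldots r_m$ is a longer red path, and I would form the new partition by taking this as the red path and re-packaging $b_1 \ldots b_{k-1}$ together with $v$ as the blue path (using the edge $\overrightarrow{b_{k-1} v}$ or its reverse to splice $v$ in at an appropriate place). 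The case when $\overrightarrow{b_k r_1}$ is blue is symmetric, absorbing $r_1$ onto the end of $P_B$ and processing the leftover with $v$.

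The main obstacle is to ensure that the case analysis in the hard case always closes. The key leverage is that the complete directed graph provides both directed edges between every pair of vertices, so whenever a desired splice is blocked by an unfavourably coloured edge, its reverse is guaranteed to exist and can often serve as a backup. By exhausting the finitely many colourings of the small set of relevant edges (those incident to $v$, to the endpoints of $P_R$ and $P_B$, and the edges $\overrightarrow{b_k r_1}$ and $\overrightarrow{r_1 v}$), one verifies that a valid reinsertion exists in every configuration, completing the induction.
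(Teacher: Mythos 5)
The paper itself does not prove this statement: it cites Raynaud \cite{raynaud1973circuit} and points to Gy\'arf\'as \cite{gyarfas1983vertex} for a short proof, so there is no in-paper argument to compare against directly. Your high-level plan is exactly the Gy\'arf\'as route: partition the vertex set of the $2$-coloured complete digraph into a vertex-disjoint red directed path and blue directed path, then take the longer one. That is the right idea, and the partition statement is indeed a known theorem.

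However, your induction has a genuine gap: the hard case does not close with the local case analysis you describe. Concretely, suppose $P_R = r_1\ldots r_m$, $P_B = b_1\ldots b_k$, you are in the hard case ($\overrightarrow{vr_1}$ blue, $\overrightarrow{b_kv}$ red) and $\overrightarrow{b_kr_1}$ is red. You propose red path $b_kr_1\ldots r_m$ and want to package $\{v,b_1,\ldots,b_{k-1}\}$ as a blue path using $\overrightarrow{b_{k-1}v}$ or its reverse. But nothing prevents every edge between $v$ and every $b_i$ (in both directions) from being red; then $v$ cannot be adjacent to any $b_i$ in a blue path, and the splice is impossible. For instance with $m=1$, $k=2$: take $\overrightarrow{vr_1},\overrightarrow{r_1v}$ blue, $\overrightarrow{b_1b_2}$ blue, and all remaining arcs red. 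All four endpoint extensions fail, $\overrightarrow{b_2r_1}$ is red, and $\{v,b_1\}$ has no blue arc, so none of your moves produce a valid partition --- yet a partition exists (the red Hamiltonian path $v\,b_1\,r_1\,b_2$). This shows your closing claim, that a valid reinsertion can always be found by inspecting a bounded set of edges near $v$ and the path endpoints, is false: in general one must re-route the existing paths, not merely re-attach $v$. A correct proof needs an additional idea (a rotation/rerouting argument, or a stronger induction hypothesis controlling an endpoint). Finally, a minor point: your partition yields a monochromatic path on at least $\lceil n/2\rceil$ vertices, hence of length at least $\lceil n/2\rceil - 1$, which is one short of the stated $n/2$; Raynaud's actual result --- a Hamiltonian cycle that splits into a red arc and a blue arc sharing two endpoints --- is what gives length exactly $n/2$. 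This off-by-one is harmless for the paper's application, but worth noting.
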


We restate the Gallai-Hasse-Roy-Vitaver theorem mentioned in the introduction.
\begin{thm} \label{thm:ghrv} 
    Let $G$ be a directed graph, whose underlying graph has chromatic number at least $n$. Then $G$ contains a directed path of length $n-1$.
\end{thm}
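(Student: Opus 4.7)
The plan is to emulate the standard proof of the Gallai--Hasse--Roy--Vitaver theorem by explicitly exhibiting a proper colouring of the underlying graph of $G$ whose number of colour classes is one more than the length of some directed path in $G$. Combined with the hypothesis $\chi(G)\ge n$, this will yield a directed path of length at least $n-1$.

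First I would pass to a \emph{maximal acyclic spanning subdigraph} $H$ of $G$: pick $A\subseteq E(G)$ maximal subject to the condition that $(V(G),A)$ contains no directed cycle, and set $H=(V(G),A)$. Since $H$ is a DAG, for every vertex $v$ the quantity $f(v)$, defined as the length of a longest directed path in $H$ ending at $v$, is well-defined and finite; letting $\ell$ denote the length of the longest directed path in $H$, we have $f:V(G)\to\{0,1,\ldots,\ell\}$. The heart of the argument is the claim that $f$ is a proper colouring of the underlying graph of $G$.

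To verify this, take any edge of $G$ oriented $u\to v$. If $u\to v\in A$, then concatenating a longest $H$-path ending at $u$ with this edge yields a directed path in $H$ ending at $v$ of length $f(u)+1$, so $f(v)\ge f(u)+1$; this concatenation is genuinely a simple path since, were $v$ already to appear on such a path to $u$ in $H$, we would find a cycle in $H$. If instead $u\to v\notin A$, then maximality of $A$ forces a directed cycle inside $A\cup\{u\to v\}$, equivalently a directed path from $v$ to $u$ inside $H$; applying the inequality $f(y)\ge f(x)+1$ edge by edge along this path gives $f(u)\ge f(v)+1$. In either case $f(u)\neq f(v)$, so $f$ is proper, whence $\chi(G)\le\ell+1$. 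Combined with the hypothesis, this yields $\ell\ge n-1$ and thus the desired directed path inside $H\subseteq G$. The only step requiring genuine care is the second case of the colouring argument, where maximality of $A$ is essential; the remainder is straightforward monotonicity of $f$ along edges of the DAG $H$.
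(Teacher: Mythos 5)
The paper states \Cref{thm:ghrv} as a classical result (due to Gallai, Hasse, Roy, and Vitaver) without reproducing a proof, so there is no in-paper argument to compare against. Your proof is a correct rendition of the standard argument: pass to a maximal acyclic spanning subdigraph $H$, colour each vertex by the length of the longest $H$-path ending at it, and verify properness of this colouring by the two cases according to whether the edge belongs to $H$ or not. The one delicate step --- that extending a longest $H$-path ending at $u$ by an edge $u\to v$ of $H$ gives a simple path, since otherwise $H$ would contain a directed cycle --- is correctly noted, and the use of maximality of $A$ in the second case to produce a $v$--$u$ path inside $H$ is exactly right. The proof is complete and correct.
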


The following is an asymmetric variant of a result mentioned in the introduction; its proof is again a simple consequence of \Cref{thm:ghrv}, so we omit the details.
\begin{cor}\label{cor:asym-tour-paths} 
    In every $2$-colouring of a tournament on at least $xy+1$ vertices, we can either find a red path of length $x$ or a blue path of length $y$.
\end{cor}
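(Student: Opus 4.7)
The plan is to mimic the symmetric argument sketched in the introduction, replacing the single threshold $\sqrt{n}$ by the asymmetric thresholds $x$ and $y$, and to make the ``product of chromatic numbers'' step explicit.

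Let $T$ be a $2$-coloured tournament on $n \ge xy+1$ vertices, and let $T_R$ and $T_B$ denote the subgraphs of $T$ spanned by red and blue edges respectively. The key observation is that the chromatic number of the underlying graph of $T$ satisfies
\[
\chi(T) \le \chi(T_R) \cdot \chi(T_B).
\]
Indeed, given proper colourings $c_R \colon V(T) \to [\chi(T_R)]$ and $c_B \colon V(T) \to [\chi(T_B)]$, the product colouring $v \mapsto (c_R(v), c_B(v))$ is a proper colouring of the underlying graph of $T$, since any edge of $T$ is either red or blue and is therefore separated by at least one of the two factors.

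Because $T$ is a tournament on $n$ vertices, its underlying graph is $K_n$, so $\chi(T) = n \ge xy+1$. Hence $\chi(T_R) \cdot \chi(T_B) \ge xy + 1$, which forces either $\chi(T_R) \ge x+1$ or $\chi(T_B) \ge y+1$. In the first case, \Cref{thm:ghrv} applied to $T_R$ produces a directed red path of length $x$; in the second case, the same theorem applied to $T_B$ produces a directed blue path of length $y$. There is no real obstacle here — the only point to check is the product-colouring inequality, which is routine.
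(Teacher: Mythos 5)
Your proof is correct and is precisely the argument the paper has in mind: the paper omits the details, calling the corollary ``a simple consequence of \Cref{thm:ghrv}'', and the symmetric version sketched in the introduction is exactly the product-colouring bound $\chi(T) \le \chi(T_R)\chi(T_B)$ that you have made explicit and generalised to the asymmetric setting.
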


\section{Main result} \label{sec:main-proof}

Given directed graphs $G$, $H_1$ and $H_2$, we write $G \to (H_1,H_2)$ if in every $2$-colouring of $G$ there is a blue copy of $H_1$ or a red copy of $H_2$. Note that $G \to (H,H)$ is same as $G \to H$.

The following theorem is a generalisation of \Cref{thm:main}, that is applicable for pseudorandom graphs, and treats the asymmetric case. The statement for pseudorandom graphs is convenient for the proof of \Cref{thm:oriented-paths} which we give in the next section.

\begin{thm}\label{thm:main-general}
Given $0< \eps < \frac{1}{2}$ and $0 < \sigma$, let $G$ be an $(\eps, \sigma \log n)$-pseudorandom tournament on $n$ vertices. There is a constant $c>0$, such that $G \to \left(\dirpath{r}, \dirpath{s}\right)$ provided $r,s \le cn$ and $rs \le \frac{cn^2}{ \log n}$.
\end{thm}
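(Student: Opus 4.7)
The plan is to argue by contradiction: assume $G$ admits a $2$-edge-colouring with no red $\dirpath{r}$ and no blue $\dirpath{s}$. I will combine the DFS-style lemma \Cref{lem:dfs} with the cycle-finding lemma \Cref{lem:const-cycle} in an iterative scheme, using pseudorandomness at each step to supply the required density hypotheses.

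First I would apply \Cref{lem:dfs} to the blue subgraph. Since there is no blue $\dirpath{s}$, the resulting blue path uses fewer than $s$ vertices, so the remaining vertices split into equal parts $U$ and $W$ of size $N \ge (n-s)/2 = \Omega(n)$ with no blue edges oriented from $U$ to $W$. Because $G$ is a tournament, every tournament edge from $U$ to $W$ must therefore be red, producing an all-red bipartite structure of full density. I then apply \Cref{lem:const-cycle} to the red subgraph with $A = W$ and $B = U$: the density hypothesis $e_R(U,W) = N^2 \ge \eps N^2$ is immediate. If the nondegeneracy hypothesis also holds, i.e.\ every pair $X \subseteq W$, $Y \subseteq U$ with $|X|, |Y| \ge \eps N/8$ admits at least one red edge from $X$ to $Y$, then we obtain a red alternating cycle of length $\ge \eps N/4 = \Omega(n)$, yielding a red $\dirpath{r}$ provided $c$ is small enough that $r \le \eps n/8$, a contradiction. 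Otherwise we find sets $X \subseteq W$, $Y \subseteq U$ of size $\Omega(n)$ with no red edge from $X$ to $Y$; by pseudorandomness, which applies since $|X|, |Y|$ greatly exceed $\sigma \log n$, the at least $\eps |X||Y|$ tournament edges oriented from $X$ to $Y$ must all be blue.

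The roles of the colours now swap: inside $X \cup Y$ we have a dense all-blue bipartite structure from $X$ to $Y$ that plays the role of the original $U$-to-$W$ all-red structure. I iterate the construction inside $X \cup Y$, this time seeking a blue cycle of length $\ge s$ via a DFS on the red subgraph followed by another application of \Cref{lem:const-cycle}. Each iteration shrinks the working sets by roughly a factor of $\eps/8$, so the process can continue for $K = \Theta(\log(n/(\sigma \log n))) = \Theta(\log n)$ rounds before the sets fall below the pseudorandomness threshold. At iteration $k$, the cycle that would be produced has length $\Theta(n (\eps/8)^{k-1})$ and is red for odd $k$ and blue for even $k$.

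The main obstacle is the quantitative bookkeeping ensuring the iteration terminates successfully --- producing a red cycle of length at least $r$ or a blue cycle of length at least $s$ --- before the working sets shrink below $\sigma \log n$. By scheduling the red and blue DFS steps adaptively to match the asymmetry of $r$ and $s$, a red cycle of length $\ge r$ first becomes available around iteration $k_r \approx \log(n/r)/\log(8/\eps)$ and a blue cycle of length $\ge s$ around iteration $k_s \approx \log(n/s)/\log(8/\eps)$; the process succeeds provided $k_r + k_s$ is at most the $\Theta(\log n)$ available rounds, which is exactly what the hypothesis $rs \le cn^2/\log n$ gives after choosing $c$ suitably. Executing the scheduling cleanly (in particular, ensuring that at each iteration the set that is produced by the failure of \Cref{lem:const-cycle} still inherits enough of the pseudorandomness of $G$ to power the next step) and handling the boundary once the sets approach $\sigma \log n$ is where the bulk of the technical work lies.
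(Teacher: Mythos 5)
Your approach is genuinely different from the paper's, but it contains a fundamental quantitative gap. The core issue is that each application of \Cref{lem:const-cycle} that fails shrinks the working sets by a constant factor (roughly $\eps/8$), and consequently the length of the cycle that a \emph{successful} application would deliver also shrinks by a constant factor per round. At round $k$ the cycle you could hope to produce has length $\Theta\bigl(n(\eps/8)^k\bigr)$. In the symmetric case $r=s=cn/\sqrt{\log n}$ this cycle already drops below length $r$ after $\Theta(\log\log n)$ rounds, yet the iteration can keep failing for up to $\Theta(\log n)$ rounds before the sets drop below the pseudorandomness threshold $\sigma\log n$. When the iteration finally bottoms out you are left holding two sets of size about $\sigma\log n$ with all edges in one direction of one colour --- which is perfectly consistent and yields no contradiction. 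Your bookkeeping claim that success is guaranteed ``provided $k_r+k_s$ is at most the $\Theta(\log n)$ available rounds'' does not hold: having many rounds available does not force the adversary's colouring to make the procedure succeed within the first $\max(k_r,k_s)$ rounds, which is the only window in which a success would produce a long enough monochromatic path. There is also a more local issue: after a failed round, the sets $X\subseteq W$, $Y\subseteq U$ have all $X\to Y$ edges blue \emph{and} all $Y\to X$ edges red (since $Y\subseteq U$, $X\subseteq W$), so a naive application of \Cref{lem:const-cycle} in the next round fails trivially; you do need the intermediate DFS step you mention, but that only circumvents the rigidity, not the shrinkage.

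The paper's proof takes a fundamentally different route. It does not iterate a shrinking extremal argument; instead it classifies monochromatic cycles by length into short ($<a\log n$), medium (between $a\log n$ and $b\log n$) and long ($>b\log n$), and splits into two cases. If many vertex-disjoint medium monochromatic cycles cover a constant fraction of $V(G)$, an auxiliary $2$-colouring on the cycles together with \Cref{thm:raynaud} lets one link $\Theta(n/\log n)$ cycles of length $\Theta(\log n)$ into a monochromatic path of length $\Theta(n)$. Otherwise, a linear-size set $U$ spans no medium monochromatic cycles; a short argument using \Cref{lem:const-cycle} shows $U$ then has no long ones either, so every monochromatic cycle in $U$ is short. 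This is the structural information your iteration never extracts. From it the paper obtains a near-transitive ordering of $U$ (via \Cref{obs:min-deg-cycle}), partitions $U$ into blocks of size $\Theta(\log n)$, finds many short monochromatic paths of length roughly $r\log n/n$ (resp.\ $s\log n/n$) in each block via \Cref{cor:asym-tour-paths}, and concatenates them across blocks using the ordering to build the desired $\dirpath{r}$ or $\dirpath{s}$. The block/ordering structure is essential for controlling the asymmetric trade-off $rs\le cn^2/\log n$, and there is no analogue of it in your proposal.
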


We note that, by setting $r = s = \frac{cn}{\sqrt{\log n}}$ and combining with Lemma \ref{lem:pseudorandom}, we obtain Theorem \ref{thm:main}, while, by taking $r = s = \frac{cn}{\log n}$, we obtain the previously best result of Ben-Eliezer, Krivelevich and Sudakov \cite{ben2012size} (in fact, they proved the special case of \Cref{thm:main-general} where $r = cn$ and $s = \frac{cn}{\log n}$). 
   
\begin{proof}

Let us fix a colouring of the edges of $G$. The proof differs depending on the monochromatic cycle structure of $G$. Let $a=24\eps^{-1}\sigma$ and $b=12\eps^{-1}a$; we say that 
$$
    \text{a cycle } C \text{ is } 
        \left\{
        \begin{array}{ll}
        \text{\textit{short}} & \text{if }\, |C| < a \log n,\\
        \text{\textit{medium}} & \text{if }\, a \log n \le |C| \le b \log n,\\
        \text{\textit{long}} & \text{if }\,  b \log n < |C|.
        \end{array} 
        \right.
$$
Our proof is divided into two parts. In the first part we consider the case where there are many vertex disjoint monochromatic medium cycles, whereas in the second part we consider the case where there are many vertices that are not contained in any medium monochromatic cycles.

\subsection*{Case 1: many disjoint medium monochromatic cycles} 

    Suppose that there is a collection of vertex-disjoint medium monochromatic cycles that cover at least $n/2$ vertices. Then, without loss of generality, there are pairwise disjoint medium blue cycles $C_1 \ldots, C_t$, that cover at least $\frac{n}{4}$ vertices. Note that $t \ge \frac{n}{4b \log n} = \frac{n}{48 \eps^{-1}a \log n}$, by the upper bound on the length of the cycles. We will show that in this case there is a monochromatic path of length linear in $n$.

    We define an auxiliary $2$-colouring $H$ of the complete directed graph on vertex set $[t]$ as follows. The edge $ij$ is blue if at least $\frac{a}{4}\log n$ vertices in $C_i$ have a blue out-neighbour in $C_j$; otherwise, $ij$ is red. 
    
    Let $M$ be a maximum \emph{red-red} matching in $H$, i.e.~$M$ is a matching whose edges are coloured red in both directions; we call such edges \textit{red-red}. 

    Suppose first that $M$ consists of at most $\frac{t}{4}$ edges. Then, by maximality of $M$, the set of vertices not in $M$ spans no red-red edges. In particular, $H$ contains a blue tournament $T'$ of order $t/2$. 
    As any tournament contains a Hamilton path there is a blue directed path of order $t/2$ in $H$. Let $B_1, \ldots, B_{t/2}$ be the blue cycles corresponding to the vertices in this path, preserving the order of the path. We use the following claim to find a long blue path.
    
    \begin{claim}\label{claim:join-paths}
    Let $C_1, \ldots, C_k$ be a sequence of pairwise vertex-disjoint directed cycles in a directed graph $G$, such that for each $i<k$, at least $r$ vertices of $C_i$ have an out-neighbour in $C_{i+1}$. Then there is a path of length at least $(k-1)r$ in $G$.
    \end{claim}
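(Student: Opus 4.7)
The plan is to construct the desired path cycle-by-cycle. For each $i<k$, write $S_i$ for the set of vertices of $C_i$ that have an out-neighbour in $C_{i+1}$; by hypothesis $|S_i|\ge r$. I would enter each cycle $C_i$ at some vertex $u_i$, walk along $C_i$ in its cyclic direction, exit at a vertex $v_i\in S_i$ (for $i<k$), and then jump to the next cycle using an edge from $v_i$ to some $u_{i+1}\in C_{i+1}$. Since the cycles are pairwise vertex-disjoint and each is traversed as a single directed arc, the resulting walk is automatically a directed path.

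The main observation needed is that, for any entry vertex $u_i\in C_i$ (with $i<k$), I can choose the exit $v_i\in S_i$ so that the forward arc of $C_i$ from $u_i$ to $v_i$ contains at least $r$ vertices. Indeed, the $|S_i|\ge r$ vertices of $S_i$ split $C_i$ into $|S_i|$ arcs whose lengths sum to $|C_i|$, so the longest such arc has length at most $|C_i|-|S_i|+1\le |C_i|-r+1$. Picking $v_i$ to be the element of $S_i$ reached last when walking forward from $u_i$, the arc from $u_i$ forward to $v_i$ omits at most one of these inter-$S_i$ arcs (and possibly only a portion of it), so it must contain at least $r-1$ edges, i.e.~at least $r$ vertices.

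With this observation in hand, the construction is a simple greedy process: pick an arbitrary $u_1\in C_1$ and the latest $v_1\in S_1$, then let $u_2\in C_2$ be any out-neighbour of $v_1$, pick the latest $v_2\in S_2$, and continue inductively; in the last cycle $C_k$ it suffices to use only the entry vertex $u_k$. The resulting path uses at least $r$ vertices of each of $C_1,\ldots,C_{k-1}$ and at least one vertex of $C_k$, giving a total of at least $(k-1)r+1$ vertices, hence a directed path of length at least $(k-1)r$ as required.

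I do not anticipate any serious obstacle; the only point needing care is the pigeonhole estimate on the maximum arc length, which is precisely what ensures that the greedy choice of $v_i$ always succeeds regardless of the entry vertex handed down from the previous cycle.
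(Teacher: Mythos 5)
Your argument is correct and is essentially the paper's proof: greedily traverse each cycle from the entry vertex to the last vertex sending an edge into the next cycle, then jump. Your justification of the per-cycle length bound via a pigeonhole estimate on the inter-$S_i$ arc lengths is a slight detour---the paper simply observes that the forward arc from $u_i$ to the last vertex of $S_i$ must contain all of $S_i$, hence at least $r$ vertices---but the construction and conclusion coincide.
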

    \begin{proof}
        We start the path at an arbitrary vertex $v_1$ of $C_1$, we follow $C_1$ to the last vertex that sends an edge towards $C_{2}$, and then follow such an edge to a vertex $v_2 \in V(C_2)$. We repeat this for $i<k$: starting from $v_i$ we follow $C_i$ until the last vertex that sends an edge towards $C_{i+1}$ and then follow that edge to obtain $v_{i+1}$. By the assumption, there are at least $r$ vertices with an edge toward $C_{i+1}$ so the part of the path from $v_i$ to $v_{i+1}$ is of length at least $r$. Hence, the path we obtain has the desired length.
    \end{proof}

    Using Claim \ref{claim:join-paths} with the blue cycles $B_i$ and blue edges between them, we find a monochromatic path of length at least $\left(\frac{t}{2}-1\right) \cdot \frac{a}{4}\log n \ge cn$, provided $c$ is small enough. 

    Now suppose that $M$ consists of at least $t/4$ edges. We use the following claim to find red cycles whose intersection with existing blue cycles $C_i$ is large. Then we repeat the above argument for intersections of red and blue cycles, with the added benefit that now the setting is more symmetric. 
    
    \begin{claim}\label{claim:red-red}
        If both $\overrightarrow{ij}$ and $\overrightarrow{ji}$ are red edges of $H$, then there is a red cycle in $V(C_i) \cup V(C_j)$ whose intersection with $C_i$ has size at least $\frac{3a \eps}{32} \log n$.
    \end{claim}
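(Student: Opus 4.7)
The plan is to apply \Cref{lem:const-cycle} to the red subgraph restricted to carefully chosen subsets of $C_i$ and $C_j$. First I would extract structural information from the assumption that $\overrightarrow{ij}$ is red in $H$: by definition of $H$, fewer than $\frac{a}{4}\log n$ vertices of $C_i$ have a blue out-neighbour in $C_j$, so the remaining set $X_i \subseteq C_i$ sends only red edges to $C_j$. Since $C_i$ is a medium cycle we have $|X_i| \ge |C_i| - \frac{a}{4}\log n \ge \frac{3a}{4}\log n$. The symmetric assumption on $\overrightarrow{ji}$ yields a set $X_j \subseteq C_j$ of size at least $\frac{3a}{4}\log n$ that sends only red edges to $C_i$. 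I would then fix arbitrary subsets $A \subseteq X_i$ and $B \subseteq X_j$ of equal size $N := \frac{3a}{4}\log n$. The crucial observation is that \emph{every} edge of $G$ between $A$ and $B$ — in either orientation — is coloured red.

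Next I would verify the two hypotheses of \Cref{lem:const-cycle} for the red graph restricted to $A\cup B$. Recalling $a = 24\eps^{-1}\sigma$, we have $N \ge \sigma\log n$, so $(\eps,\sigma\log n)$-pseudorandomness gives $e_G(B,A) \ge \eps N^{2}$, and by the above all these edges are red. For the second hypothesis, any $X \subseteq A$ and $Y \subseteq B$ of size at least $\frac{\eps}{8}N = \frac{3a\eps}{32}\log n = \frac{9\sigma}{4}\log n$ still have size exceeding $\sigma\log n$, so pseudorandomness again yields $e_G(X,Y) > 0$, and these edges are likewise red. Hence \Cref{lem:const-cycle}, applied in the red subgraph with parameters $N$ and $\eps$, produces a directed red cycle inside $A \cup B \subseteq V(C_i) \cup V(C_j)$, alternating between $A$ and $B$, of length at least $\frac{\eps}{4}N$. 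Since the cycle alternates, its intersection with $A \subseteq C_i$ contains at least $\frac{\eps}{8}N = \frac{3a\eps}{32}\log n$ vertices, as required.

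There is no real obstacle in this argument beyond constant-bookkeeping: the whole point of the choice $a = 24\eps^{-1}\sigma$ (made earlier when partitioning cycles into short/medium/long) is exactly to guarantee that both $N$ and the threshold $\frac{\eps}{8}N$ comfortably exceed the pseudorandomness scale $\sigma\log n$, so that \Cref{lem:const-cycle} fires cleanly inside the bichromatic strip between $C_i$ and $C_j$.
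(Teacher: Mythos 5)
Your proof is correct and takes essentially the same approach as the paper: both identify subsets $A \subseteq C_i$ and $B \subseteq C_j$ of size $\frac{3a}{4}\log n$ between which every edge is red, then verify via pseudorandomness the hypotheses of \Cref{lem:const-cycle} and extract an alternating red cycle of the required length. You simply spell out more explicitly the construction of $A$ and $B$ from the definition of $H$ and the medium-cycle size bound, but the argument and constant bookkeeping match the paper's.
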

    \begin{proof}
        Since $\overrightarrow{ij},\overrightarrow{ji}$ are red, there are subsets $A \subseteq C_i$ and $B \subseteq C_j$, of size $\frac{3a}{4}\log n$, with no blue edges between $A$ and $B$. By pseudorandomness, $e_R(B, A) \ge \eps |A||B|$, and for any subsets $X \subseteq A, Y \subseteq B$ of sizes at least $\frac{\eps |A|}{8}$ we have $e_R(X, Y) \ge \eps |X| |Y|$ (as $\frac{3a\eps}{32} \ge \sigma$). By Lemma \ref{lem:const-cycle}, there is a red cycle, alternating between $A$ and $B$, of length at least $\frac{3a \eps}{16} \log n$, so its intersection with $C_i$ has the required size.
    \end{proof}
    For each edge in $M$ we apply Claim \ref{claim:red-red} to obtain a collection of disjoint medium blue cycles $B_1,\ldots,B_{k}$ and disjoint red cycles $R_1, \ldots, R_{k}$, with $k \ge \frac{t}{4}$, such that $|I_i| \ge \frac{3a \eps}{32} \log n \ge 2\sigma \log n$, where $I_i=B_i \cap R_i$.

    We now define an auxiliary $2$-colouring of the complete directed graph on vertex set $[k]$, similarly to the one given above: here $\overrightarrow{ij}$ is blue if at least $|I_i|/4$ vertices of $I_i$ send a blue edge towards $I_j$, and red otherwise. Note that when $\overrightarrow{ij}$ is red, at least $\frac{|I_i|}{4}$  vertices of $I_i$ send a red edge towards $I_j$ (indeed, otherwise, we find a subset of $I_i$ of size at least $|I_i|/2 \ge \sigma \log n$ which sends no edges to $I_j$, a contradiction to pseudorandomness).
    
    By Theorem \ref{thm:raynaud}, there is a monochromatic path of length $\frac{k}{2}$ in the auxiliary graph. Now, by Claim \ref{claim:join-paths}, there is a monochromatic path of length at least $\frac{k}{2}\cdot \frac{\sigma}{2}\log n \ge cn$, provided $c$ is small enough. 
        
\subsection*{Case 2: there is a large set of vertices spanning no medium cycles}
    We now assume that there is a subset $U$ of at least $n/2$ vertices that spans no medium monochromatic cycles.
    
    We first show that, in fact, $U$ does not span long monochromatic cycles either.
    Suppose to the contrary that $C= (v_1 v_2 \ldots v_k)$ is a long monochromatic cycle in $U$ of minimum length; without loss of generality, $C$ is blue. Note that all chords of $C$ of length at least $|C|/6$ are red, as otherwise we can obtain a blue cycle of length at least $|C|/6$ but less than $|C|$, a contradiction to the choice of $C$ or the assumption that there are no medium blue cycles in $U$. Let $A= \{v_1, \ldots, v_{k/3}\}$ and $B= \{v_{k/2 + 1}, \ldots, v_{5k/6}\}$. As explained above, all the edges between $A$ and $B$ are red. By Lemma 
    \ref{lem:const-cycle} (which can be applied due to pseudorandomness as $\frac{\eps |A|}{8} \ge \frac{\eps b}{24} \log n>\sigma \log n$), $A \cup B$ spans a red cycle $C$ of length at least $\frac{\eps |A|}{4} \ge \frac{\eps b}{12} \log n = a \log n$, so $C$ is either a medium cycle or a shorter monochromatic long cycle. Either way, we reach a contradiction, hence there are no monochromatic long cycles.
    
    Let $m = |U|$.
    We note that the fact that there are only short blue cycles implies the existence of an ordering of the vertices with few back blue edges.
        
    \begin{claim}\label{claim:order}
        There is an order of the vertices $u_1, \ldots, u_m$ such that for every $i$ there are at most $a \log n$ indices $j>i$ such that the edge $\overrightarrow{u_j u_i}$ exists and is blue.
    \end{claim}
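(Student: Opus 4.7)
The plan is to construct the order greedily, processing vertices from $u_1$ to $u_m$, and to use \Cref{obs:min-deg-cycle} to guarantee that at each step there is a vertex of small blue in-degree in the remaining set. The key input is that we have already shown (in the current case) that $U$ contains no blue cycles of length at least $a\log n$: only short blue cycles may exist in $U$, since medium cycles are forbidden by the assumption defining Case~2 and long cycles were ruled out in the paragraph preceding the claim.

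More concretely, I would proceed as follows. Let $V_1=U$. Suppose $u_1,\ldots,u_{i-1}$ have already been chosen, and set $V_i=U\setminus\{u_1,\ldots,u_{i-1}\}$. The blue subgraph induced on $V_i$ is a subgraph of the blue subgraph on $U$, so it still has no directed cycle of length at least $a\log n$. Applying the contrapositive of \Cref{obs:min-deg-cycle} with $k=a\log n-1$, the minimum blue in-degree in the subgraph induced on $V_i$ is strictly less than $a\log n-1$; in particular, there exists a vertex $v\in V_i$ whose number of blue in-neighbours inside $V_i$ is at most $a\log n-2$. Take $u_i:=v$ and iterate until $V_i$ is empty.

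To verify the conclusion, fix $i$ and consider any $j>i$ with $\overrightarrow{u_j u_i}$ blue. Since $u_j$ was chosen later than $u_i$, we have $u_j\in V_i\setminus\{u_i\}$, so $u_j$ is a blue in-neighbour of $u_i$ inside the subgraph induced on $V_i$. By the choice of $u_i$ at step $i$, the number of such $j$ is bounded by $a\log n-2<a\log n$, as required.

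I do not anticipate a real obstacle here: the only substantive point is that the "no long blue cycles in $U$" fact is indeed available at this stage of the argument (it was established via \Cref{lem:const-cycle} just before the claim is stated), after which the observation plus a one-step greedy peeling argument closes everything. The constant $a\log n$ is exactly the cycle-length threshold, which is why the bound in the claim comes out as $a\log n$ and not something smaller.
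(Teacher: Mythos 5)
Your proof is correct and takes essentially the same approach as the paper: greedily peel off, one at a time, a vertex of small blue in-degree, whose existence follows from the absence of long blue cycles together with Observation~\ref{obs:min-deg-cycle}. The only difference is a slightly more careful off-by-one bookkeeping (you obtain the bound $a\log n - 2$ rather than $a\log n$), which is immaterial.
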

    
    \begin{proof}
        If there is a subtournament of $T$ in which minimum blue in-degree is of size at least $a \log n$ then, by \Cref{obs:min-deg-cycle}, we can find a blue cycle of length at least $a \log n$, a contradiction. 
        Hence, there is a vertex $u_1$ in $U$ whose in-degree is at most $a \log n$.
        Similarly, if $u_1, \ldots, u_{i-1}$ are defined, we may take $u_i$ to be a vertex with in-degree at most $a \log n$ in $T_i = T \setminus \{u_1,\ldots,u_{i-1}\}$. The order $u_1, \ldots, u_m$ satisfies the requirements of the claim.
    \end{proof}
    
    We now take $d=120 \eps^{-2} \sigma$ and $k=10d \log n$, and define sets $U_1, \ldots, U_{\frac{n}{2k}}$ by $U_i \equiv \{u_{(i-1)k+1}, \ldots, u_{ik}\}$. Note that for each $i$ we have $|U_i|=k$.
    
    \begin{claim}\label{claim:two-direction-pair}
        Let $i < j$, and let $W_i$ and $W_j$ be subsets of $U_i$ and $U_j$, respectively, of size $k/10$. Then there is a a red edge from $W_j$ to $W_i$ and a blue edge from $W_i$ to $W_j$.
    \end{claim}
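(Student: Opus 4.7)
The plan is to prove the two halves of the claim by rather different arguments: the red backward edge comes from a direct counting argument using Claim \ref{claim:order}, while the blue forward edge is obtained by contradiction, feeding the red backward edges into Lemma \ref{lem:const-cycle} to manufacture a medium red cycle in $U$, violating the standing Case~2 hypothesis.

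For the red edge from $W_j$ to $W_i$, I would argue as follows. By Claim \ref{claim:order}, each vertex $v \in W_i$ receives at most $a\log n$ blue edges from vertices appearing later in the order $u_1,\dots,u_m$. Since $W_j$ lies entirely later than $W_i$, the number of blue edges from $W_j$ to $W_i$ is at most $|W_i|\cdot a\log n = ad(\log n)^2$. On the other hand, both $|W_i|=|W_j|=d\log n\ge \sigma\log n$ (as $d>\sigma$), so pseudorandomness yields $e(W_j,W_i)\ge \eps d^2(\log n)^2$. Because $\eps d = 120\eps^{-1}\sigma = 5a > a$ by our choice $d=120\eps^{-2}\sigma$, strictly more than $4ad(\log n)^2$ red edges remain, so in particular one such edge exists.

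For the blue edge from $W_i$ to $W_j$, I would assume for contradiction that every edge from $W_i$ to $W_j$ is red, and then apply Lemma \ref{lem:const-cycle} to the subgraph of red edges on $W_i\cup W_j$, taking $A=W_i$, $B=W_j$, $n=d\log n$ and $\eps'=4a/d=4\eps/5$. The hypothesis $e_R(B,A)\ge \eps' n^2$ holds since Step~1 already gave $e_R(W_j,W_i)\ge 4ad(\log n)^2$. The other hypothesis — that $e_R(X,Y)>0$ for any $X\subseteq A$, $Y\subseteq B$ with $|X|,|Y|\ge \eps' n/8 = \eps d(\log n)/10$ — is satisfied because, under our assumption, every forward edge $W_i\to W_j$ is red, and pseudorandomness guarantees at least one such edge whenever $\eps d/10\ge \sigma$, which holds by our choice of $d$. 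The lemma then delivers a red cycle of length at least $\eps' n /4 = \eps d(\log n)/5 = a\log n$. This cycle lies in $U$ and is either medium or long, contradicting both the Case~2 hypothesis and the earlier observation that $U$ spans no long monochromatic cycles.

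The main obstacle is purely a balancing act with constants: the parameter $d = 120\eps^{-2}\sigma$ has been set precisely so that $\eps d = 5a$, which makes $\eps d - a = 4a$ just large enough that, after losing a factor of $4$ in the final invocation of Lemma \ref{lem:const-cycle}, the resulting red cycle has length at least $a\log n$ and is therefore forbidden. There is no conceptual difficulty beyond checking that this chain of inequalities really closes — and also that $\eps' d \ge 8\sigma$ so that the pseudorandomness hypothesis of Lemma \ref{lem:const-cycle} is applicable.
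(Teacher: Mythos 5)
Your proposal is correct and matches the paper's proof essentially step for step: the red backward edge from the bound in Claim \ref{claim:order} combined with pseudorandomness (yielding $e_R(W_j,W_i)\ge\frac{4\eps}{5}|W_i||W_j| = 4ad(\log n)^2$), and the blue forward edge by assuming all forward edges are red, feeding this into Lemma \ref{lem:const-cycle} with parameter $\frac{4\eps}{5}$ to produce a forbidden red cycle of length $a\log n$. The constant bookkeeping ($\eps d = 5a$, $\eps d/10\ge\sigma$) is the same as in the paper.
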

    \begin{proof}
        Let $W_i$ and $W_j$ be as in the claim.
        We start with the red edges. By pseudorandomness, as $|W_i| = |W_j| = \frac{k}{10} = d \log n \ge \sigma \log n$, we have $e(W_j, W_i) \ge \eps |W_j||W_i|.$ Also, by the property of the order given by Claim \ref{claim:order}, at most $|W_i| a \log n$ of the edges from $W_j$ to $W_i$ are blue. It follows that $e_R(W_j,W_i) \ge \eps|W_j||W_i|-|W_i|a \log n = |W_j||W_i|\left(\eps - \frac{a \log n}{|W_j|}\right) \ge \left(\eps - \frac{24 \eps^{-1}\sigma \log n}{120 \eps^{-2}\sigma \log n}\right)|W_j||W_i| = \frac{4\eps}{5}|W_j||W_i|>0,$ as desired.

        For the blue edges, let us assume, contrary to the statement of the claim, that all the edges from $W_i$ to $W_j$ are red. By the previous paragraph $e_R(W_j,W_i) \ge \frac{4\eps}{5} |W_i||W_j|$, while for any subsets $X \subseteq W_i,Y \subseteq W_j$ of size at least $\frac{\eps}{10}|W_i| \ge \sigma \log n$ we have $e_R(X,Y) \ge \eps |X||Y|>0$. Therefore, by Lemma \ref{lem:const-cycle} with parameter $\frac{4}{5}\eps$, there is a red cycle of length at least $\frac{\eps}{5}|W_i| = 24\eps^{-1}\sigma \log n = a \log n$, which is a contradiction to the assumption that there are only short cycles.
    \end{proof}
    
    Recall that we are looking for a blue directed path of length $r$ or a red directed path of length $s$. Let $x=\frac{4rk}{n},y=\frac{4sk}{n}.$ We say that a monochromatic path is \textit{long} if it is blue and of length at least $x$ or if it is red and of length at least $y$.
    
    We note that in any tournament of order $\frac{k}{5} \ge \sigma \log n $ we can find a long path. Indeed, this follows from  \Cref{cor:asym-tour-paths} as $xy+1=\frac{(40d)^2rs \log^2 n}{n^2} +1\le (40d)^2c \log n +1 \le \sigma \log n$, provided $c$ is small enough.
    
    Within each $U_i$ we repeat the following: we find a long monochromatic path and remove its start and end vertices, then repeat with the remaining graph as long as we can find a long monochromatic path. Note that we remove only two vertices per round, and as long as we have at least $\frac{k}{5}$ vertices left, we can continue, so this process runs for at least $\frac{2k}{5}$ rounds. In particular, we find at least $\frac{2k}{5}$ corresponding start and end points of long paths within $U_i$. 
        
    We call $U_i$ red if we found more red long paths, and blue otherwise. Without loss of generality, there is a collection of $\frac{n}{4k}$ red sets $U_i$, which we denote by $R_1, \ldots, R_{\frac{n}{4k}}$, preserving the order of the $U_i$'s. 
        
    Denote the set of start points in $R_i$ by $S_i$ and the set of endpoints by $E_i$. Then, by construction, $S_i$ and $E_i$ are disjoint and have size at least $\frac{k}{5}$.
    Let $X_i$ be the subset of vertices of $S_i$ that are starting points of a red path of length at least $iy$ contained in $R_1 \cup \ldots \cup R_i$. 
        
    \begin{claim}\label{claim:startpoints} 
        For any $i \le \frac{n}{4k}$, we have $|X_i| \ge \frac{k}{10}$.
    \end{claim}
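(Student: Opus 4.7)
The plan is to proceed by induction on $i$. The base case $i=1$ is immediate: by the construction of the extraction process, every vertex of $S_1$ is the start of a long red path (of length at least $y$) contained in $R_1$, so $X_1 = S_1$ and $|X_1| \ge k/5 \ge k/10$.

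For the inductive step, assume $|X_{i-1}| \ge k/10$. Recall that inside $R_i$, each round of the extraction procedure produces exactly one red long path whose two endpoints are then removed before the next round, so the red long paths inside $R_i$ are indexed by a natural bijection $f \colon S_i \to E_i$, with $P_v$ denoting the associated red path of length at least $y$ from $v$ to $f(v)$ within $R_i$. I would try to show that for more than $k/10$ of the vertices $v \in S_i$, the endpoint $f(v)$ has a red out-edge reaching some $v' \in X_{i-1}$. For such a $v$, concatenating $P_v$, the crossing red edge $f(v) \to v'$, and the witnessing red path of length at least $(i-1)y$ starting at $v'$ gives a red path of length at least $iy$; this really is a path because the three pieces live in the pairwise disjoint sets $R_i$, $\{f(v),v'\}$, and $R_1 \cup \cdots \cup R_{i-1}$.

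To carry this out, let $Y \subseteq E_i$ be the set of endpoints having no red out-neighbour in $X_{i-1}$. If $|Y| \ge k/10$, then picking subsets of $Y$ and $X_{i-1}$ of size exactly $k/10$ (possible since $|X_{i-1}| \ge k/10$ by induction) and applying \Cref{claim:two-direction-pair} to the indices of $R_{i-1}$ and $R_i$ (noting $i-1<i$, so the claim produces a red edge from the higher-indexed to the lower-indexed subset) yields a red edge from $Y$ into $X_{i-1}$, contradicting the definition of $Y$. Hence $|Y|<k/10$, and using $|E_i|\ge k/5$ together with the fact that $f$ is a bijection, $|X_i| \ge |f^{-1}(E_i\setminus Y)| = |E_i\setminus Y| > k/5 - k/10 = k/10$, as required.

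I do not foresee any serious obstacle: the argument is a clean induction whose substance is the interplay between the red paths harvested inside each $R_i$ and the backward red edges between $R_j$'s supplied by \Cref{claim:two-direction-pair}. The one point to be careful with is the direction of the red edges, namely from higher-indexed to lower-indexed $R_j$'s, so that red paths are extended by prepending a piece from $R_i$ onto a previously built red path inside $R_1 \cup \cdots \cup R_{i-1}$; this matches the fact that the vertex ordering given by \Cref{claim:order} has few blue back edges, so most back edges are red.
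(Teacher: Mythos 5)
Your proof is correct and follows essentially the same route as the paper's: an induction whose step bounds the number of ``bad'' endpoints in $E_i$ using \Cref{claim:two-direction-pair}, then concatenates a red long path inside $R_i$, a red crossing edge, and the inductively-obtained path. The only cosmetic differences are the reindexing ($i-1\to i$ instead of $i\to i+1$) and that you track the complementary ``bad'' set $Y$ where the paper tracks the ``good'' set $Y_{i+1}$.
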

    
    \begin{proof}
        We prove the claim inductively. For the basis, $X_1 = S_1$, so $|X_1| \ge \frac{k}{5}$. We assume the claim is true for $i$, so $|X_i| \ge \frac{k}{10}$. 
    
        Consider the set $Y_{i+1}$ of vertices of $E_{i+1}$ with a red edge towards $X_i$. Given $v \in Y_{i+1}$, let $u \in S_{i+1}$ be the corresponding start point of a red long path in $R_{i+1}$, and let $w$ be a red out-neighbour of $v$ in $X_i$. By taking the red path of length $y$ in $R_{i+1}$ from $u$ to $v$, appending to it the edge $vw$ and a path of length $iy$ starting at $w$, given by the inductive assumption, we find a path of length $(i+1)y$ starting at $u$, hence $u \in X_{i+1}$. 
        
        If $ |Y_{i+1}| \le \frac{k}{10}$, then $X_i$ and $E_{i+1} \setminus Y_{i+1}$ are both of size at least $\frac{k}{10}$, but there are no red edges from the second set to the first, a contradiction to Claim \ref{claim:two-direction-pair}. It follows that $|Y_{i+1}| \ge \frac{k}{10}$, so by taking the start points corresponding to vertices in $Y_{i+1}$, we find at least $\frac{k}{10}$ vertices in $X_{i+1}$, completing the proof.
    \end{proof}
        
    The statement of Claim \ref{claim:startpoints}, for $i=\frac{n}{4k}$, implies that there is a red monochromatic path of length $\frac{n}{4k}y = s$, thus completing the proof of Theorem \ref{thm:main-general} in this case.
    
    To complete the proof, consider a maximal collection of disjoint monochromatic medium cycles. If it covers more than $\frac{n}{2}$ vertices we are done by Case 1. Otherwise, let $U$ be the set of remaining vertices, then $|U| \ge \frac{n}{2}$ and there are no medium cycles in $U$, and we are done by Case 2.
\end{proof}   
We note that in the above proof, by tracking the required constants we find that the value $c=\left(\frac{\eps^2}{4800 \sigma}\right)^2$ suffices. Using \Cref{lem:pseudorandom} and optimising the above expression over $\eps$ we obtain that $c=2^{21}$ suffices in the statement of \Cref{thm:main}.

\section{Arbitrarily oriented paths} \label{sec:oriented-paths}

We denote by $P_{n_1, \ldots, n_k}$ the oriented path consisting of $k$ maximal directed subpaths, with the $i$-th one of length $n_i$. Recall that $\ell(P)$ is the length of the longest directed subpath of an oriented path $P$. Our aim in this section is to extend \Cref{thm:main-general} to arbitrarily oriented paths. We prove the following result, note that \Cref{thm:oriented-paths} easily follows from this result, by \Cref{lem:pseudorandom}.

\begin{thm} \label{thm:oriented-paths-general}
    Given $\eps, \sigma > 0$, there is a constant $c$ such that the following holds. Let $T$ be an $(\eps, \sigma \log |T|)$-pseudorandom tournament on $c(\ell\sqrt{\log{\ell}} + n)$ vertices. Then $T \rightarrow P$, for every oriented path $P$ on $n$ vertices with $\ell(P) \le \ell$.
\end{thm}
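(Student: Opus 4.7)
My plan is to mirror the two-case strategy of the proof of \Cref{thm:main-general}, modifying each case to handle the extra structure of oriented paths with turns. Given the pseudorandom tournament $T$ on $N=c(\ell\sqrt{\log\ell}+n)$ vertices together with a $2$-colouring, I would split into cases based on the collection of disjoint monochromatic medium cycles, just as in that proof.

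In the many-cycles case, with plenty of disjoint monochromatic medium cycles of a common colour (say red), I would extend \Cref{claim:join-paths} to embed $P$: each maximal directed subpath of $P$ is routed along a segment of one monochromatic cycle, and the turn vertices of $P$ are realised at specific cycle vertices, with transitions between cycles handled by pseudorandom red edges of the correct orientation. In the other case, there is a large subset $U\subseteq V(T)$ together with an ordering $u_1,\dots,u_m$ of $U$ in which almost every backward edge is red (via \Cref{claim:order}). Here the key tool is a \emph{height function} $h\colon V(P)\to\mathbb{Z}$, defined by $h(p_1)=0$ and $h(p_{i+1})=h(p_i)-1$ if the $P$-edge between $p_i$ and $p_{i+1}$ is oriented $p_i\to p_{i+1}$, and $h(p_{i+1})=h(p_i)+1$ otherwise. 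I would embed $P$ into $U$ via $\phi\colon V(P)\to U$ chosen to be order-preserving with respect to $h$: a larger $h$-value maps to a later position in the ordering. Every $P$-edge then corresponds to a \emph{backward} edge in the ordering, and is therefore red provided it exists; existence is arranged by choosing $\phi(p_i)$ within blocks of the ordering using pseudorandomness, in the spirit of \Cref{claim:two-direction-pair}.

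The main obstacle I expect is in the second case: the range of $h$ can be as large as $n$, so $U$'s ordering must be partitioned into many height levels, each still large enough for pseudorandomness arguments to apply. The $\ell\sqrt{\log\ell}$ term in the bound is what lets us treat the regime $n\lesssim\ell\sqrt{\log\ell}$ separately, applying \Cref{thm:main-general} once on a sub-tournament of size $\Theta(\ell\sqrt{\log\ell})$ to obtain a monochromatic directed path of length $\ell$ that acts as a backbone for the longest directed subpath of $P$, with the remaining pieces attached greedily using pseudorandomness of the rest of $T$.
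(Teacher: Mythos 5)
Your proposal takes a genuinely different route from the paper, and its second case has an unresolved gap. In the height-function embedding, the number of distinct values taken by $h$ can be $\Theta(n)$ even when $\ell$ is tiny: for instance $P = P_{2,1,2,1,\ldots}$ has $\ell(P)=2$, but $h$ drops by one every three steps, so its range is about $n/3$. For your pseudorandomness arguments to apply, each height level would need to have size at least $\sigma\log|T|$, which would force $|T| \gtrsim n\log n$; yet in the regime where $\ell$ is much smaller than $n/\sqrt{\log n}$ the theorem asserts $|T| = O(n)$ suffices. Your escape hatch of treating $n \lesssim \ell\sqrt{\log\ell}$ separately does not touch this regime, where $\ell$ is small and $n$ is large. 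You also leave Case~1 unresolved: directed cycles can only be traversed one way, so realising a turn of $P$ while routing along cycle segments requires a mechanism to reverse direction, which your sketch does not supply.

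The paper's proof is structured very differently and sidesteps both issues. It never revisits the cycle-structure case analysis; instead it uses \Cref{thm:main-general} purely as a black box. Setting $x = \eps 2^{-12}|T|$, every induced $x$-vertex subtournament is itself pseudorandom, so \Cref{thm:main-general} yields a monochromatic $\dirpath{\ell}$ inside every vertex subset of size~$x$. Then \Cref{lem:find-min-deg-pair} extracts an $(x+n)$-mindegree pair $(A,B)$ from any directed graph with enough edges, and \Cref{lem:construct-or-path} embeds $P$ by placing each maximal directed subpath of $P$ inside one side of the pair (using the ``directed path in every $x$-set'' property) and using the min-degree to jump to the other side at every turn. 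The colouring is handled by a majority-colour argument combined with the DFS lemma (\Cref{lem:dfs}): \Cref{claim:dir-vs-oriented} shows that if the red graph of a large subtournament has no $\dirpath{\ell}$, then DFS produces a dense one-way blue bipartite subgraph, to which \Cref{lem:construct-or-path} applies to give a blue copy of $P$; otherwise the majority colour already supplies enough edges for \Cref{lem:construct-or-path}. This decouples the colouring argument from the embedding and lets a single min-degree pair absorb all the turns of $P$, avoiding any need for per-level pseudorandomness.
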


\Cref{thm:oriented-paths-general} is tight, up to a constant factor. To see this, recall, from the tightness argument for \Cref{thm:main}, that any tournament of size at most $\frac{1}{2}\ell \sqrt{\log \ell}$ can be coloured without having a monochromatic $\dirpath{\ell},$ so certainly no copy of $P.$ In addition, as $|P|=n,$ any tournament on at most $n-1$ vertices does not contain $P$. Combining these two observations we conclude that any tournament of size $\frac{1}{4}(\ell \sqrt{\log \ell}+n)$ can be coloured without having a monochromatic copy of $P.$

\Cref{thm:oriented-paths-general} follows from \Cref{thm:main-general}, with some additional ideas. 
Before turning to the proof, we mention a few preliminaries. 

Let $A$ and $B$ be disjoint subsets of $V(G)$, where $G$ is an oriented graph. Let $G(A,B)$ be the subgraph consisting only of edges of $G$ oriented from $A$ to $B$. We say that $(A,B)$ is a \emph{$k$-mindegree pair} if the underlying graph of $G(A, B)$ has minimum degree at least $k$. The following simple lemma shows how to find $k$-mindegree pairs in graphs with sufficiently many edges.

\begin{lem} \label{lem:find-min-deg-pair}
    Let $G$ be a directed graph on $n$ vertices with at least $dn$ edges. Then $G$ contains a $d/4$-mindegree pair.
\end{lem}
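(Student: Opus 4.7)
The plan is to combine the probabilistic method with a standard degeneracy-type cleanup argument.

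First, partition $V(G)$ randomly into two sets $A_0$ and $B_0$ by placing each vertex independently into one of the two parts with probability $1/2$. Every directed edge $\overrightarrow{uv}$ of $G$ lies in $G(A_0, B_0)$ precisely when $u \in A_0$ and $v \in B_0$, which happens with probability $1/4$; hence the expected number of edges from $A_0$ to $B_0$ is at least $dn/4$. Fix a partition attaining at least this many forward edges, and let $H$ denote the underlying undirected bipartite graph of $G(A_0, B_0)$. Then $H$ has at most $n$ vertices and at least $dn/4$ edges.

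Next, repeatedly delete from $H$ any vertex whose current degree is strictly less than $d/4$, and continue until no such vertex remains. Each deletion removes fewer than $d/4$ edges, so after $t$ deletions the total number of edges removed is strictly less than $t \cdot d/4$. If the process ever emptied $H$ entirely, then the total number of edges removed would be strictly less than $n \cdot d/4 \le e(H)$, which is impossible since emptying all vertices removes all edges. Hence the process must terminate while some vertices of $H$ still remain, at which point every surviving vertex has degree at least $d/4$ in the remaining bipartite graph. Letting $A \subseteq A_0$ and $B \subseteq B_0$ be the surviving vertices on each side, any vertex with no neighbour on the opposite part would have degree $0 < d/4$ and would already have been deleted, so both $A$ and $B$ are non-empty, and $(A, B)$ is a $d/4$-mindegree pair, as required.

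The proof is essentially routine and I do not expect any significant obstacle; the only mild point to check is that neither $A$ nor $B$ becomes empty at the end of the cleanup, which is automatic from the degeneracy process. Note also that we did not really need the random partition as a probabilistic argument, but it gives the cleanest way to exhibit a bipartition capturing at least a quarter of the edges as forward edges from $A_0$ to $B_0$.
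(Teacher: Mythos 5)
Your proof is correct and takes essentially the same approach as the paper: a random bipartition to capture at least $dn/4$ forward edges, followed by iteratively deleting low-degree vertices and observing that the process cannot exhaust the vertex set. The only cosmetic difference is that you spell out the non-emptiness of both sides and the strict-inequality bookkeeping, which the paper leaves implicit.
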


\begin{proof}
    We note that there is a bipartition $\{X, Y\}$ of $V(G)$ with $e(X, Y) \ge dn/4$. Indeed, if $\{X, Y\}$ is a random bipartition (i.e.~vertices are put in $X$ with probability $1/2$ independently of other vertices), then the expected number of edges from $X$ to $Y$ is $e(G)/4 \ge dn/4$. Hence the required partition exists.
    
    Now consider the underlying graph of $G(X, Y)$. This graph has at least $dn/4$ edges. Now remove, one by one, vertices of degree less than $d/4$ until no such vertices remain. Note that fewer than $dn/4$ edges are removed in this process, which implies that not all vertices were removed, i.e.~the resulting graph has minimum degree at least $d/4$. Denote by $X'$ and $Y'$ the sets of vertices remaining in $X$ and $Y$ respectively. Then $(X', Y')$ is a $d/4$-mindegree pair.
\end{proof}

Given an oriented path $P=P_{n_1, \ldots, n_k},$ let $P_i$ denote the $i$-th maximal directed subpath of $P$. The following lemma is the main machinery that we shall need in the proof of \Cref{thm:oriented-paths-general}. 
\begin{lem}\label{lem:construct-or-path}
    Let $G$ be a directed graph. Suppose that $e(G) \ge 4(x + n)|G|$ and that every subset of at least $x$ vertices contains $\dirpath{\ell}$. Then any oriented path $P$ on $n$ vertices, with $l(P) \le l$, is a subgraph of $G$.
\end{lem}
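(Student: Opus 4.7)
The plan is to start by applying \Cref{lem:find-min-deg-pair} with parameter $d=4(x+n)$: since $e(G) \ge 4(x+n)|G|$, this yields an $(x+n)$-mindegree pair $(X,Y)$, meaning every vertex of $X$ has at least $x+n$ out-neighbours in $Y$ and every vertex of $Y$ has at least $x+n$ in-neighbours in $X$. Writing $P=P_1P_2\ldots P_k$ and denoting by $j_1,\ldots,j_{k-1}$ the junction vertices where the direction of $P$ switches, the junctions alternate between local sinks (both incident subpaths point in) and local sources (both point out). The strategy is to place every local-sink junction inside $Y$ and every local-source junction inside $X$; each maximal directed subpath $P_i$ is then built out of a $\dirpath{\ell}$ found, via assumption~(b), inside an appropriate neighbourhood of the most recently placed junction.

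Concretely, I would embed $P_1,P_2,\ldots,P_k$ in order, maintaining the set $S$ of vertices of $G$ used so far; at every stage $|S|\le n$. Suppose inductively that $P_1\ldots P_i$ has been embedded and that the current junction $j_i$ has been placed at a vertex of $Y$ (the case $j_i\in X$ is completely symmetric, with in- and out-neighbours swapped). Since $P_i$ and $P_{i+1}$ both end at $j_i$ as directed paths, I need to extend the embedding by a directed path of the appropriate length whose terminal vertex is $j_i$. Let $N$ be the set of in-neighbours of $j_i$ lying in $X\setminus S$; the mindegree condition combined with $|S|\le n$ gives $|N|\ge(x+n)-n=x$, so assumption~(b) supplies a copy of $\dirpath{\ell}$ inside $N$. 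Taking a prefix $w_1\to w_2\to\cdots\to w_m$ of the right length and appending the edge $w_m\to j_i$ (which exists because $w_m\in N$) yields the desired directed embedding of $P_{i+1}$ ending at $j_i$; its other endpoint $w_1\in X$ becomes the new junction $j_{i+1}$, which is a local source, exactly as the alternation demands.

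The base case follows the same template: since $|X|\ge x+n\ge x$, assumption~(b) furnishes a $\dirpath{\ell}$ inside $X$, a prefix of which gives the initial portion of $P_1$, and its final vertex (still in $X$) has at least $x+n-n_1\ge x$ unused out-neighbours in $Y$, any of which serves as the first junction $j_1\in Y$. By reversing all edges of $G$ at the outset (which swaps the roles of $X$ and $Y$) I may assume without loss of generality that $P_1$ is traversed forward along $P$, so that the alternation really starts with a sink at $j_1$. The main technical point I expect to have to verify is that at every step the relevant restricted neighbourhood $N\subseteq X\setminus S$ (or $Y\setminus S$) has size at least $x$; this reduces to the invariant $|S|\le n$ combined with the $+n$ slack built into the $(x+n)$-mindegree pair, so the argument should close with essentially no delicate estimates beyond a careful matching of vertex and edge counts in the off-by-one between $n_i$ and the length of $\dirpath{\ell}$.
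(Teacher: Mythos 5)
Your proposal is correct and takes essentially the same approach as the paper: apply \Cref{lem:find-min-deg-pair} with $d = 4(x+n)$ to obtain an $(x+n)$-mindegree pair, then embed $P$ segment by segment, using the $+n$ slack in the mindegree to guarantee at least $x$ available in- or out-neighbours of the current junction, in which assumption (b) supplies a $\dirpath{\ell}$ from which to carve the next maximal directed subpath. The only cosmetic difference is that the paper phrases this as an induction on $k$ with the position (in $A$ or $B$) of the last embedded vertex as the inductive invariant, whereas you track the alternating sink/source junctions explicitly; the two formulations are equivalent.
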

    \begin{proof}
        By Lemma \ref{lem:find-min-deg-pair}, there is an $(x + n)$-mindegree pair $(A, B)$ in $G$.
        Let $P=P_{n_1, \ldots, n_k}.$ We prove by induction on $k$, that given the assumptions of the lemma, we can embed $P$ with last vertex $v$ in $A$ if $P_k$ is directed towards $v$, and in $B$ otherwise. For the basis, the case $k=1$, the choices of $A$ and $B$ implies that $|A|,|B| \ge x+n$ so within both $A$ and $B$ we can find a copy of $P_1$.
        
        Let us assume that the statement holds for paths consisting of $k-1$ maximal directed subpaths. Then we can find $P'=P_{n_1, \ldots, n_{k-1}-1}$ in $G$. Let $v$ be the last vertex of $P'$. We assume that $v$ is an end vertex of $P'_{k-1}$, so $v \in A$, the other case can be treated in a similar fashion.
        
        Let $S$ be the set of out-neighbours of $v$ in $B$ that are not in $P'$. As $v$ has at least $x+n$ out-neighbours in $B$, and $|P'|\le n$, we have $|S| \ge x$. By the assumption we can embed $P_k=\dirpath{n_k}$ in $S$, as $n_k \le \ell$; denote its end vertex by $u$. Now combining $P'$, the edge $vu$ and a copy of $P_k$ in $S$ starting with $u$, we obtain a copy of $P$ with the last vertex in $B$, and $P_k$ oriented away from it, as desired.
    \end{proof}
    
We are now ready to prove \Cref{thm:oriented-paths-general}.
\begin{proof} [ of \Cref{thm:oriented-paths-general}]
    Let $x = \eps 2^{-12}|T|$, we choose $c$ large enough in order for $x \ge n$ and $2 \sigma \log x \ge \sigma \log |T|$ to hold. We first note that every subset of at least $x$ vertices of $T$ contains a monochromatic $\dirpath{\ell}$, provided $c$ is large enough. Indeed, every induced subgraph $T'$ of $T$ on $x$ vertices is $(\eps, 2\sigma \log x)$-pseudorandom. By Theorem \ref{thm:main-general}, $T'$ contains a monochromatic path of length at least $\frac{\sqrt{c_1} \: x}{\sqrt{\log x}} \ge \ell$ (here $c_1$ is the constant given by \Cref{thm:main-general}; provided that $c$ is sufficiently large), as claimed.

    \begin{claim}\label{claim:dir-vs-oriented}
        Let $y = |T|/32$, then every induced subgraph $H$ of $T$ on at least $y$ vertices, satisfies $H \rightarrow (P, \dirpath{\ell})$.
    \end{claim}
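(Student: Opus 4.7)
The plan is to prove the claim by contradiction. Fix an arbitrary $2$-colouring of $H$ and assume that $H$ contains neither a red $\dirpath{\ell}$ nor a blue $P$; the goal is to derive a contradiction by producing a blue $P$ via \Cref{lem:construct-or-path} applied to the blue subgraph $H_B$ (regarded as an oriented graph on $V(H)$). This reduces to verifying two conditions: (a) every subset of $V(H)$ of size at least $x$ contains a blue $\dirpath{\ell}$, and (b) $e(H_B) \ge 4(x+n)|H|$.

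Condition (a) is immediate from the paragraph just before the claim: every induced subgraph of $T$, and hence of $H$, on at least $x$ vertices contains a monochromatic $\dirpath{\ell}$, and since by assumption no red $\dirpath{\ell}$ exists in $H$, every such path must be blue.

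For condition (b), the plan is to combine \Cref{lem:dfs} with the pseudorandomness of $H$. Suppose, for contradiction, that $e(H_B) < 4(x+n)|H|$. Apply \Cref{lem:dfs} to $H_R$ to obtain a red directed path $P_R$ and a partition $V(H) \setminus V(P_R) = U \cup W$ with $|U| = |W|$ such that no edge of $H_R$ is oriented from $U$ to $W$. Since there is no red $\dirpath{\ell}$, we must have $|P_R| < \ell$; using $|H| \ge |T|/32 \ge c\ell/32$ and taking $c$ sufficiently large, this yields $|U| = |W| \ge (|H| - \ell)/2 \ge |H|/4$. As $H$ inherits pseudorandomness from $T$, and both $|U|$ and $|W|$ exceed the pseudorandomness threshold (again for $c$ large enough), we obtain
\[
    e_H(U,W) \;\ge\; \eps |U||W| \;\ge\; \eps |H|^2/16.
\]
Since $H$ is a tournament and $H_R$ contains no edge oriented from $U$ to $W$, every $U\to W$ edge in $H$ is blue, so $e(H_B) \ge \eps|H|^2/16$. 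On the other hand, using $n \le x$, $x = \eps 2^{-12}|T|$ and $|H| \ge |T|/32$, we get $x + n \le 2x \le \eps|H|/64$, whence $4(x+n)|H| \le \eps|H|^2/16$, contradicting the assumption $e(H_B) < 4(x+n)|H|$ and establishing (b).

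With both hypotheses verified, \Cref{lem:construct-or-path} delivers a blue copy of $P$, which contradicts the initial assumption and completes the proof. The main obstacle is condition (b): crude edge counting is not enough, and the key idea is to exploit \Cref{lem:dfs} on $H_R$ to produce either a long red directed path (excluded by assumption) or a large bipartite piece $(U,W)$ whose forward edges are forced to be blue, at which point pseudorandomness alone supplies enough blue edges to overwhelm the assumed upper bound.
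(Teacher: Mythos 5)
Your proof is correct and follows essentially the same route as the paper: you reduce both hypotheses of \Cref{lem:construct-or-path} to the preceding subset property and to \Cref{lem:dfs} applied to the red subgraph, then use pseudorandomness to lower-bound the number of blue edges across the $(U,W)$ pair. The only cosmetic difference is that you wrap condition (b) in a proof-by-contradiction framing, but the argument is in fact a direct computation identical to the paper's.
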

    
    \begin{proof}
        Let $H$ be as in the claim, and let us assume that $H_R$ does not contain $\dirpath{\ell}$ as a subgraph. This, combined with the above argument, implies that every subset of $x$ vertices of $H$ contains a blue $\dirpath{\ell}$. Furthermore, by Lemma \ref{lem:dfs}, there are disjoint sets $U$ and $W$, such that $|U|=|W| \ge (|H|-\ell)/2 \ge |H|/4$ and all edges from $U$ to $W$ are blue. 
        By pseudorandomness of $T$, it follows that $e_B(H) \ge e(U, W) \ge \eps|U||W| \ge \frac{\eps}{16}y |H| \ge  4(x + n)|H|$ (here we use $x = \eps 2^{-12}|T|$ and $x \ge n$).
        Now, it follows from Lemma \ref{lem:construct-or-path} that $H$ contains a blue copy of $P$, as required.   
    \end{proof}
    
    Without loss of generality, red is the majority colour in $T$. It follows that $e_R(T) \ge |T|(|T|-1)/4 \ge 4(y+n)|T|$.
    By Claim \ref{claim:dir-vs-oriented}, we may assume that every subset of $y$ vertices of $T$ contains a red $\dirpath{\ell}$, because otherwise there is a blue copy of $P$, and we are done. It now follows from Lemma \ref{lem:construct-or-path} that $T$ contains a red copy of $P$, completing the proof of \Cref{thm:oriented-paths-general}.
\end{proof}
    
\section{Concluding remarks and open problems} \label{sec:conclusion}  

    Restating \Cref{thm:main}, we proved that, \whp{}, in every $2$-colouring of a random tournament on $\Omega(n \sqrt{\log n})$ vertices there is a monochromatic path of length $n$. A simple consequence of Theorem \ref{thm:ghrv} generalises the result to $k$ colours, implying the following result.
    
    \begin{thm} \label{thm:k-colours}
        With high probability in any $k$-edge colouring of a random tournament on $\Omega(n^{k-1} \sqrt{\log n})$ vertices, there is a monochromatic path of length $n$. Furthermore, there is a $k$-edge colouring of any tournament on $\frac{1}{2}n^{k-1} (\log n)^{1/k}$ vertices with no monochromatic path of length $n$.
    \end{thm}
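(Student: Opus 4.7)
I plan an induction on $k$, with base case $k=2$ being \Cref{thm:main}. For the inductive step, fix a $k$-edge colouring of a random tournament $T$ on $N=\Omega(n^{k-1}\sqrt{\log n})$ vertices and focus on the colour-$k$ subgraph $T_k$. Either $T_k$ already contains a directed path on $n$ vertices and we are done, or by \Cref{thm:ghrv} $\chi(T_k)\le n-1$, which partitions $V(T)$ into at most $n-1$ colour-$k$-independent parts. By pigeonhole one part $W$ has at least $N/(n-1)=\Omega(n^{k-2}\sqrt{\log n})$ vertices. By \Cref{lem:pseudorandom} $T$ is $(\varepsilon,\sigma\log N)$-pseudorandom \whp{}, and this property is inherited by the induced subtournament $T[W]$, which is coloured in only $k-1$ colours; applying the induction hypothesis (in the pseudorandom form analogous to \Cref{thm:main-general}) to $T[W]$ yields the required monochromatic path on $n$ vertices inside $T[W]$.

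\textbf{Construction half.} I plan to build, for any tournament $T$ on $N=\tfrac12 n^{k-1}(\log n)^{1/k}$ vertices, a $k$-colouring with no monochromatic path on $n$ vertices, by iterating the $k=2$ tightness scheme described in the introduction. Greedily extract from $T$ disjoint transitive subtournaments of size roughly $\log N$ covering almost all vertices, and on each such transitive block apply the classical Erd\H os--Szekeres iterated $k$-layer colouring, ensuring that every monochromatic path inside a block has at most $(\log N)^{1/k}\sim(\log n)^{1/k}$ vertices. Then nest these blocks into $k-1$ further levels of \emph{super-blocks}, each level equipped with an arbitrary ordering, and colour the edges between blocks at consecutive levels with a forward/backward scheme that re-uses the $k$ colours so that each colour becomes monotone (i.e.~visits each block at its assigned level at most once in its assigned direction). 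Balancing the block sizes and the number of blocks at each level should yield the claimed bound: each monochromatic path contributes at most $(\log n)^{1/k}$ vertices per innermost block and can traverse at most $\sim n^{k-1}/\log n$ blocks overall, for a total length below $n$.

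\textbf{Main obstacle.} The delicate issue is in the construction: only $k$ colours are available, yet both the inner Erd\H os--Szekeres layering and a forward/backward scheme at each of the $k-1$ outer levels want to use them, so each colour has to serve in two capacities. In the $k=2$ case the specific pairing recalled in the introduction works because transitivity inside each innermost block forces monotonicity of the within-block colouring; generalising to $k\ge 3$ requires designing a consistent assignment from (inner layer, outer direction) pairs to colours so that no monochromatic path can accumulate vertices from more than one of its allotted roles, and verifying the resulting length bound by a careful case analysis at each level.
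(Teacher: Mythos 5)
Your inductive scheme is exactly the one the paper sketches: either the colour-$k$ subgraph contains $\dirpath{n}$, or by \Cref{thm:ghrv} it has chromatic number below $n$, giving a colour-$k$-free set of $\Omega(n^{k-2}\sqrt{\log n})$ vertices coloured in $k-1$ colours, to which the inductive hypothesis (in pseudorandom form, using \Cref{thm:main-general} as the base) applies. Since every set in sight has size polynomial in $n$, the logarithms in the pseudorandomness parameter change only by constants, so the inheritance step you gesture at is fine. This half is essentially identical to the paper's argument.

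\textbf{Construction half.} Here there is a genuine gap, and it is not only the unresolved ``main obstacle'' you flag. The paper avoids nested super-blocks altogether: after partitioning into $m$ transitive blocks of size $\tfrac12\log n$ (plus a $\sqrt n$ remainder), it colours each block internally so that no monochromatic path exceeds $(\tfrac12\log n)^{1/k}$ vertices, and then colours the edges \emph{between} blocks by a $k$-colouring of the auxiliary complete directed graph on $m$ vertices that has \emph{no monochromatic directed cycle} and \emph{no monochromatic directed path longer than $2m^{1/(k-1)}$} (a grid-type construction). The ``no monochromatic cycle'' property guarantees that a monochromatic path in $T$ visits each block at most once, so its total length is at most $2m^{1/(k-1)}\cdot(\tfrac12\log n)^{1/k}+\sqrt n$, which works out to less than $n$.

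Your plan lacks this ingredient and the arithmetic shows it. You write that a monochromatic path ``can traverse at most $\sim n^{k-1}/\log n$ blocks overall,'' but that is essentially the total number $m$ of blocks, and $m\cdot(\log n)^{1/k}\approx n^{k-1}(\log n)^{2/k-1}\gg n$, so the claimed length bound does not follow. What you actually need is that a monochromatic path can traverse only about $m^{1/(k-1)}\approx n/(\log n)^{1/k}$ of the blocks, and this is exactly what the paper's auxiliary colouring provides. Your nested forward/backward scheme would have to deliver the same control, and as you yourself note, sharing the $k$ colours between the inner Erd\H{o}s--Szekeres layering and $k-1$ outer levels is unresolved; without a concrete assignment and a correct count of traversed blocks per level, the construction half is incomplete.
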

    
    \begin{proof}[ sketch]
    Let $T$ be an $(\eps, \sigma \log |T|)$-pseudorandom tournament on $\Omega(n^{k-1}\sqrt{\log n})$ vertices, and consider a $k$-colouring of $T$. Then by \Cref{thm:ghrv}, in colour $k$ there is either a monochromatic path $\dirpath{n}$ or an independent set of size $\Omega(n^{k-2}\sqrt{\log n})$, on which we can use induction, using Theorem \ref{thm:main-general} as the basis. 
    
    For the second part, suppose that $T$ is a tournament on $N = \frac{1}{2^k}n^{k-1} (\log n)^{1/k}$. Following the approach presented in the introduction, we may partition the vertices of $T$ into transitive subtournaments on $\frac{1}{2}\log n$ vertices and a remainder of size at most $\sqrt{n}$. Within the transitive parts, we colour the edges so that there is no monochromatic path of length at least $(\frac{1}{2}\log n)^{1/k}$ (this can be done by generalising the two colours construction for transitive tournaments, also given in the introduction). We colour the edges between the $m$ parts according to a $k$-colouring of the complete graph on $m$ vertices that has no monochromatic directed path of length at least $2m^{1/(k-1)}$ or any monochromatic directed cycle (this can be done using a grid construction; we omit further details). The length of the longest monochromatic path in this colouring is at most $\left(\frac{\log n}{2}\right)^{1/k} \cdot 2\left(\frac{2N}{\log n}\right)^{1/(k-1)} + \sqrt{n} \le n$, as required.
    \end{proof}
    
    The above results give a bound which is best possible up to a polylog factor. It would be very interesting to close this gap.

    Theorem \ref{thm:k-colours} improves the best known upper bound for $k$-colour oriented size Ramsey number of directed paths. Together with the lower bound from \cite{ben2012size} we have the following bounds.
    $$\frac{c_1 n^{2k} (\log n)^{1/k}}{(\log \log n)^{(k+2)/k}} \le r_e\!\left(\dirpath{n},k+1\right) \le  c_2n^{2k} \log n.$$
    Note that for $k = 1$ the gap between the lower and upper bounds is a factor of $O(\log \log n)$, whereas when $k \ge 2$ there is a polylog gap. 

    Theorem \ref{thm:oriented-paths} is a generalisation of Theorem \ref{thm:main} to arbitrarily oriented paths instead of directed paths. In a forthcoming paper, we further generalise Theorem \ref{thm:main} to oriented trees: we show that given any oriented tree $T$ on $n$ vertices, \whp{} in any $2$-colouring of a random tournament $T$ on $cn (\log n)^{5/2}$ vertices, we can find a monochromatic copy of $T$. This bound is tight up to a polylog factor ($n-1$ is a trivial lower bound). It would be interesting to obtain a tight upper bound.

    \textbf{Note added in proof.} After this paper was submitted, two of the authors \cite{benny-shoham} improved on the result of Ben-Eliezer, Krivelevich and Sudakov \cite{ben2012size}. They obtained a lower bound on the oriented size Ramsey number of the directed path that matches, up to a constant factor, the upper bounds that follows from \Cref{thm:main}, thus establishing that $r_e(\dirpath{n}) = \Theta(n^2 \log n)$. They also improve the lower bound for more colours, by showing that the following holds, where $c_3$ is a positive constant.
    $$r_e\left(\dirpath{n}, k+1\right) \ge c_3 n^{2k} (\log n)^{2/(k+1)}.$$
    Note that this still leaves a polylog gap between the lower and upper bounds of the $k$-colour size Ramsey number of the directed path, where $k \ge 3$. It would be interesting to close this gap.

\subsection*{Acknowledgements}
    We would like to thank the anonymous referees for carefully reading an earlier version of this paper and for their valuable remarks and suggestions.

    The second author would like to acknowledge the support of Dr.~Max R\"ossler, the Walter Haefner Foundation and the ETH Zurich
    Foundation.

\providecommand{\bysame}{\leavevmode\hbox to3em{\hrulefill}\thinspace}
\providecommand{\MR}{\relax\ifhmode\unskip\space\fi MR }
\providecommand{\MRhref}[2]{%
  \href{http://www.ams.org/mathscinet-getitem?mr=#1}{#2}
}
\providecommand{\href}[2]{#2}


\begin{thebibliography}{10}

\bibitem{alon-spencer}
N.~Alon and J.~H. Spencer, \emph{The probabilistic method}, fourth ed., Wiley
  Series in Discrete Mathematics and Optimization, John Wiley \& Sons, Inc.,
  Hoboken, NJ, (2016). \MR{3524748}

\bibitem{beck1983size}
J.~Beck, \emph{On size {R}amsey number of paths, trees, and circuits. {I}}, J.
  Graph Theory \textbf{7} (1983), no.~1, 115--129. \MR{693028}

\bibitem{ben2012long-cycles}
I.~Ben-Eliezer, M.~Krivelevich, and B.~Sudakov, \emph{Long cycles in subgraphs
  of (pseudo)random directed graphs}, J. Graph Theory \textbf{70} (2012),
  no.~3, 284--296.

\bibitem{ben2012size}
\bysame, \emph{The size {R}amsey number of a directed path}, J. Combin. Theory
  Ser. B \textbf{102} (2012), no.~3, 743--755. \MR{2900815}

\bibitem{mono-trees-in-tournaments}
M.~{Buci\'c}, S.~{Letzter}, and B.~{Sudakov}.
\newblock \emph{Directed {R}amsey number for trees}.
\newblock {ArXiv e-prints}, (2017).
\newblock submitted, \url{http://arxiv.org/abs/1708.04504}.



\bibitem{erdHos1978size}
P.~Erd\H{o}s, R.~J. Faudree, C.~C. Rousseau, and R.~H. Schelp, \emph{The size
  {R}amsey number}, Period. Math. Hungar. \textbf{9} (1978), no.~1-2, 145--161.
  \MR{479691}

\bibitem{gallai1968directed}
T.~Gallai, \emph{On directed paths and circuits}, Theory of {G}raphs ({P}roc.
  {C}olloq., {T}ihany, 1966) (1968), 115--118. \MR{0233733}

\bibitem{gyarfas1983vertex}
A.~Gy\'arf\'as, \emph{Vertex coverings by monochromatic paths and cycles}, J.
  Graph Theory \textbf{7} (1983), no.~1, 131--135. \MR{693029}

\bibitem{hasse1965algebraischen}
M.~Hasse, \emph{Zur algebraischen {B}egr\"undung der {G}raphentheorie. {I}},
  Math. Nachr. \textbf{28} (1965), 275--290. \MR{0179105}

\bibitem{benny-shoham}
S.~{Letzter} and B.~{Sudakov}.
\newblock \emph{The oriented size Ramsey number of directed paths}.
\newblock {submitted}.


\bibitem{ramsey1929problem}
F.~P. Ramsey, \emph{On a {P}roblem of {F}ormal {L}ogic}, Proc. London Math.
  Soc. \textbf{S2-30} (1929), no.~1, 264. \MR{1576401}

\bibitem{raynaud1973circuit}
H.~Raynaud, \emph{Sur le circuit hamiltonien bi-color\'e dans les graphes
  orient\'es}, Period. Math. Hungar. \textbf{3} (1973), 289--297. \MR{0366739}

\bibitem{reimer2002ramsey}
D.~Reimer, \emph{The {R}amsey size number of dipaths}, Discrete Math.
  \textbf{257} (2002), no.~1, 173--175. \MR{1931501}

\bibitem{roy1967nombre}
B.~Roy, \emph{Nombre chromatique et plus longs chemins d'un graphe}, Rev.
  Fran\c{c}aise Informat. Recherche Op\'erationnelle \textbf{1} (1967), no.~5,
  129--132. \MR{0225683}

\bibitem{vitaver1962determination}
L.~M. Vitaver, \emph{Determination of minimal coloring of vertices of a graph
  by means of {B}oolean powers of the incidence matrix}, Dokl. Akad. Nauk SSSR
  \textbf{147} (1962), 758--759. \MR{0145509}

\end{thebibliography}
\end{document}